\documentclass{scrartcl}
\usepackage[dvipsnames]{xcolor}
\usepackage[english]{babel}
\usepackage[utf8]{inputenc}
\usepackage{mathpazo}
\usepackage{amsmath}
\usepackage{amssymb}
\usepackage{amsthm}
\usepackage{graphicx}
\usepackage[caption=false]{subfig}
\usepackage[top=1.25in]{geometry}
\usepackage{enumitem}
\usepackage[hyphens]{url}
\usepackage[hidelinks]{hyperref}
\usepackage[capitalise]{cleveref}
\usepackage{mathrsfs}
\usepackage{caption}
\usepackage{tabularx}

\usepackage{mathtools} 


\newcommand{\mb}[1]{\mathbb{#1}}
\newcommand{\mbbr}[0]{\mathbb{R}}
\newcommand{\mbbn}[0]{\mathbb{N}}
\newcommand{\weaklyto}[2]{\ensuremath{#1 \xrightarrow{w} #2}}

\newcommand{\Pb}[0]{\mathbb{P}}

\newcommand{\pbsomega}[0]{\ensuremath{(\Omega, \mathcal{F}, P})}

\newcommand{\asto}[1]{\ensuremath{\;\;\textnormal{as}\;\;#1 \to \infty}}

\newcommand{\du}{\, \mathrm{d}}

\DeclarePairedDelimiter{\abs}{\lvert}{\rvert}
\newtheorem{lemma}{Lemma}
\newtheorem{theorem}{Theorem}

\theoremstyle{definition}
\newtheorem{remark}{Remark}

\newtheorem{example}{Example}
\newtheorem{assumption}{Assumption}

\crefdefaultlabelformat{#2\textup{#1}#3}
\Crefname{assumption}{Assumption}{Assumptions}
\Crefrangeformat{assumption}{Assumptions #3#1#4--#5#2#6}

\newcommand{\keywords}[1]
{
	{\small
		\textbf{Key words.} {#1}
		\\
	}
}

\newcommand{\amssubject}[1]
{
	{\small
		\textbf{AMS subject classifications.} {#1}
	}
}

\renewcommand{\abstract}[1]
{
	{\small
		\textbf{Abstract.} {#1}
		\\
	}
}

\title{Randomized quasi-Monte Carlo methods for risk-averse stochastic optimization}
\author{Olena Melnikov\thanks{H.\ Milton Stewart School of Industrial and Systems Engineering, Georgia Institute of Technology, Atlanta, Georgia 30332 (omelnikov6@gatech.edu,johannes.milz@gatech.edu).}
\and Johannes Milz\footnotemark[1]
 }
\date{\today}

\begin{document}

	\maketitle

	\abstract{
    We establish epigraphical and uniform laws of large numbers for sample-based approximations of law invariant risk functionals. These sample-based approximation schemes include Monte Carlo (MC) and certain randomized quasi-Monte Carlo integration (RQMC)
    methods, such as scrambled net integration. Our results can be applied to the approximation of risk-averse stochastic programs and risk-averse stochastic variational inequalities. Our numerical simulations empirically demonstrate that RQMC approaches based on scrambled Sobol' sequences can yield smaller bias and root mean square error than MC methods for risk-averse optimization.
    }

	\keywords{quasi--Monte Carlo, randomized quasi--Monte Carlo, risk-averse optimization, sample average approximation}

    \amssubject{90C15, 90C59, 65C05}

	\section{Introduction}

We consider the risk-averse stochastic optimization problem
\begin{equation}\label{eq:op}
    \min_{x \in X} \rho(F(x, \xi)),
\end{equation}
where \(X\) is a potentially infinite dimensional decision space, \(\Omega\) and \(\Xi\) are sample spaces,
and \(\xi: \Omega \to \Xi\) is a random element. The real-valued
function \(F\) is defined on \(X \times \Xi\) and the real-valued,
law invariant risk measure $\rho$ is defined
on the Lebesgue space $\mathcal{L}^p(\Omega)$
with $p \in [1, +\infty)$.

Two common challenges arise when trying to solve \eqref{eq:op} in practice.
First, knowledge of the distribution of $\xi$ may be incomplete.
For instance, only samples of $\xi$ may be available.
Second, the approximation of high-dimensional integrals is computationally intractable.
One way to address these issues is to construct
sample-based approximations of \eqref{eq:op}
and solve them instead.
Sample average approximation---a Monte Carlo (MC) sample-based approach---is a common
approach used to generate consistent approximations to \eqref{eq:op} \cite{Kleywegt2001,shapiropaper}. However,
randomized quasi-Monte Carlo (RQMC) can have superior performance
in terms of accuracy and variance reduction. Consequently, there is interest in
analyzing the effectiveness of methods such as RQMC sampling in the context of stochastic
programming.

For the risk-neutral case, the epiconsistency of RQMC-based approximations of \eqref{eq:op} is demonstrated in \cite{koivu2004variance}. The numerical experiments in \cite{koivu2004variance} suggest favorable
performance of the RQMC sample-based approach compared to MC sampling when applied
to risk-neutral stochastic programs.
A similar analysis has been conducted for approximation
schemes based on quasi-Monte Carlo (QMC) sampling in \cite{teemukoivu2005}. Furthermore, \cite{Hess2019} provides consistency
results for a general class of approximation schemes in the risk-neutral case. These
schemes approximate the distribution of \(\xi\) using auxiliary distributions. The
auxiliary distributions are required to meaningfully approximate the distribution of
\(\xi\) with respect to specific integrals. The authors of \cite{Hess2019}  demonstrate that these schemes yield epiconsistent approximations.

Some problems cannot be appropriately modeled with risk-neutral formulations. For instance, portfolio optimization often requires a risk-averse model to avoid
potentially detrimental losses.
For the risk-averse case, the epiconsistency the MC sample-based approach
has been demonstrated in \cite{shapiropaper}.
Moreover, a central limit theorem for MC sample-based composite risk functionals is established in
\cite{Dentcheva2017}.

Our main contribution lies in demonstrating the almost sure epiconvergence and uniform convergence of a class of sample-based approximation schemes for
composite risk functionals beyond MC sample-based approaches. In more detail, let \(\Theta\) be a sample space and for each $x \in X$,
let $F_x(\cdot)$ denote the measurable map
$\xi \mapsto F(x, \xi)$ on $\Xi$.
For a random variable \(Z: \Xi \to \mathbb{R}\) and a probability measure \(\nu\) on \(\Xi\),
let \(\nu \circ Z^{-1}\) denote the corresponding pushforward measure.
Roughly speaking and omitting technical considerations,
for suitable random sequences of probability measures
\((\nu_{N, \theta})_{N \in \mathbb{N}}\) indexed by \(\theta \in \Theta\), we demonstrate the epiconvergence and uniform
convergence of
\begin{equation}\label{eq:approxproblem}
    x \mapsto \rho(\nu_{N, \theta}\circ F_x^{-1})
\end{equation}
towards
\begin{equation}
    \label{eq:objective}
    x \mapsto \rho(F_x(\xi)).
\end{equation}
In the spirit of a strong law of large numbers (SLLN),
we require the sequence of
probability measures
\((\nu_{N, \theta})_{N \in \mathbb{N}}\)
to yield consistent approximations
of risk functionals as $N \to \infty$.
More specifically, we demonstrate that as long as for all random variables
$Z \colon \Xi \to \mathbb{R}$ such that \(Z \circ \xi \in \mathcal{L}^q(\Omega)\)
with $q \geq p$, and
\begin{equation}\label{eq:conditionforscheme}
    \rho(\nu_{N, \theta} \circ Z^{-1}) \to \rho(Z(\xi)) \quad
    \quad \text{as} \quad  N \to \infty \quad \text{for almost all} \quad \theta \in \Theta,
\end{equation}
the sample-based composite risk functions \eqref{eq:approxproblem}
yield epiconsistent approximations to \eqref{eq:objective}.
The condition \eqref{eq:conditionforscheme}
is canonically satisfied for MC sample-based approximations.
For these approximations, our epiconsistency
results coincide with those in \cite{shapiropaper}.
We further demonstrate that \eqref{eq:conditionforscheme}
is satisfied for certain RQMC-based schemes, notably those given by
scrambled digital nets.
Our verification leverages
the SLLN recently established in
\cite{Owen2021}. When combined with standard properties
of epiconvergence
and compactness of the decision space $X$,
our epigraphical law of large numbers provides conditions
sufficient for almost sure consistency of the optimal value of
\begin{align*}
    \min_{x \in X}\, \rho(\nu_{N, \theta}\circ F_x^{-1})
\end{align*}
towards that of
\eqref{eq:op}.

Furthermore, we provide conditions sufficient for the uniform convergence
of \eqref{eq:approxproblem} towards \eqref{eq:objective}. These sufficient conditions
are also based on a SLLN, but are more stringent than
\eqref{eq:conditionforscheme}.
For MC sample-based approximations, this uniform law of large numbers
coincides with the standard one for real-valued Carath\'eodory functions. We use this uniform SLLN to demonstrate consistency of the sample-based approximations to risk-averse stochastic variational inequalities of the form
\begin{equation}\label{eq:intro:simplevi}
     0 \in \Big[\rho_{1}(F^1_x(\xi)), \ldots, \rho_{r}(F^r_x(\xi))\Big]
        + {\partial\psi(x)},
\end{equation}
where $\psi \colon \mathbb{R}^r \to (-\infty,+\infty]$ is proper, closed,
and convex with compact domain $X$, and
$\rho_j$, $j=1, \ldots, r$, are real-valued, law invariant risk measures
and $F_x^j$,  $j=1, \ldots, r$, are maps with properties related
to those of $F_x$. Here $\partial \psi(x)$ is the subdifferential of
$\psi$ at $x \in \mathbb{R}^r$.
An instance of \eqref{eq:intro:simplevi}
is considered in \cite{Cherukuri2024},
where all $\rho_j$ equal the conditional value-at-risk,
and $\psi$ is the indicator function of a nonempty,
convex, compact set $X$.
The variational inequality in \eqref{eq:intro:simplevi}
is an instance of the
distributionally robust stochastic variational inequalities considered
in \cite{Sun2023}.

\paragraph{Overview}
After introducing notation in \Cref{sec:notation}, we discuss conditions sufficient for
consistent approximations of law invariant risk functionals
and provide examples in \Cref{sec:consistentapproximations}.
In \Cref{sec:epiconvergence}, we establish conditions sufficient for
the almost sure epiconvergence  of composite risk functionals.
Subsequently, we give conditions sufficient for a uniform law of large numbers.
We apply this result to risk-averse stochastic variational inequalities.
Finally, we provide numerical experiments comparing MC
and RQMC sample-based methods for risk-averse stochastic programs in terms of the
root mean square error (RMSE) and bias in \Cref{sec:numericalexperiements}.

\section{Notation and Preliminaries}
\label{sec:notation}

Throughout the text, let $(\Omega,\mathcal{F}, P)$ be a probability space, let $\Xi$ be a separable metric space equipped with its Borel $\sigma$-algebra,
let $\xi \colon \Omega \to \Xi$ be a random element, let
$p \in [1,+\infty)$ and $\varepsilon \geq 0$ be scalars. We impose additional
assumptions on these objects as needed. Moreover, let $\lambda$ denote the Lebesgue measure on $[0,1]^d$.
With a slight abuse of notation, we use $\xi$ to denote both the random element introduced above and deterministic elements in $\Xi$.

Next, we introduce basic terminology and notation.
We denote the uniform distribution on a set $C \subset \mathbb{R}^d$ by $\text{Uniform}(C)$.
We call $(\Omega, \mathcal{F}, P)$ \emph{nonatomic} if there
exists a random variable $U: (\Omega, \mathcal{F}, P) \to (0,1)$ such
that $U \sim \text{Uniform}(0,1)$. We denote the space of $p$-integrable random variables on $(\Omega, \mathcal{F}, P)$ by $\mathcal{L}^p(\Omega, \mathcal{F}, P)$, or simply by $\mathcal{L}^p(\Omega)$.
A functional $\rho: \mathcal{L}^p\pbsomega \to (-\infty, +\infty]$ is called
\emph{law invariant} (with respect to $P$) if for each $Y,Z \in \mathcal{L}^p\pbsomega $
with the same distribution, we have $\rho(Y) = \rho(Z)$
\cite[Def.\ 6.26]{Shapiro2021}. A risk measure $\rho  \colon \mathcal{L}^p(\Omega, \mathcal{F}, P) \to (-\infty, +\infty]$ is said to be \emph{convex} if it is a convex, monotone, and translation equivariant function \cite[Def.\ 6.4]{Shapiro2021}.
Let $V$ be a nonempty set. The indicator function
$\mb{1}_{V_0}\colon V \to \mbbr$  of a subset
$V_0 \subset V$ is defined by $\mb{1}_{V_0}(v) = 1$ if
$v \in V_0$ and  $\mb{1}_{V_0}(v) = 0$ otherwise.
Let $\mu, \mu_1, \mu_2, \ldots$ be probability measures,
and $Z, Z_1, Z_2, \ldots$ be random variables.
We denote the weak convergence of the
sequence $(\mu_n)_{n \in \mathbb{N}}$ to $\mu$,
and respectively, $(Z_n)_{n \in \mathbb{N}}$ to $Z$,
by $\weaklyto{\mu_n}{\mu}$ and $\weaklyto{Z_n}{Z}$
as $n \to \infty$.  If not specified otherwise,
metric spaces $\Upsilon$ are equipped with their Borel $\sigma$-algebra $\mathcal{B}(\Upsilon)$. For a metric space $X$, a function
$G\colon X \times \Upsilon \to \mathbb{R}$ is called \emph{a Carath\'eodory function} if
$G(x, \cdot)$ is $\mathcal{B}(\Upsilon)$-$\mathcal{B}(\mathbb{R})$-measurable
for each $x \in X$,
and $G(\cdot, \upsilon)$ is continuous for every $\upsilon \in \Upsilon$.
A function $G\colon X \times \Upsilon \to (-\infty,+\infty]$ is called \emph{random lower semicontinuous} if
the epigraphical multifunction
$\upsilon \mapsto \{(x, t) \in X \times \mathbb{R} \colon t \geq G(x,\upsilon)\}$ is closed-valued
and measurable.
For functions $f$, $f_n \colon X \to (-\infty, +\infty]$  \emph{epiconvergence}
of $(f_n)_{n \in \mathbb{N}}$ to $f$ as $n \to \infty$ means that
(i) for all sequences $(x_n)_{n \in \mathbb{N}}$
and points $x \in X$ with $x_n \to x$ as $n \to \infty$,
$\liminf_{n \to \infty}\, f_n(x_n) \geq f(x)$,
and (ii) for each $x \in X$, there exists a sequence
$(x_n)_{n \in \mathbb{N}}$ with $x_n \to x$ as $n \to \infty$
such that $\limsup_{n \to \infty}\, f_n(x_n) \leq  f(x)$.
For a real-valued random variable $Z$, let $H[Z]: (0,1) \to \mathbb{R}$
denote the quantile function associated with the distribution of $Z$. For a
probability distribution $Q$ on $\mathbb{R}$, let $H_Q(\cdot)$ indicate the corresponding quantile function.

\begin{remark}\label{lawinvariance}
If $(\Omega, \mathcal{F}, P)$ is nonatomic, then for each probability  distribution
$Q$ on $\mbbr$ there exists a random variable defined
on $(\Omega, \mathcal{F}, P)$ with distribution $Q$.
Indeed, by definition, nonatomicness guarantees the existence of
a random variable $U$ defined on $(\Omega, \mathcal{F},P)$
with $U\sim\text{Uniform}(0,1)$. Since $H_Q$ has distribution $Q$,
we obtain
\begin{equation*}
    H_{Q}(U)\sim Q.
\end{equation*}
\end{remark}

The above remark justifies viewing law invariant risk measures defined
on $\mathcal{L}^p(\Omega, \mathcal{F}, P)$
as functions of probability distributions on $\mbbr$ with finite $p$th moment,
provided that $(\Omega, \mathcal{F}, P)$ is nonatomic.

\section{Consistent Approximations of Law Invariant Functionals}
\label{sec:consistentapproximations}
Empirical distributions have been shown to yield consistent
approximations of risk functionals \cite{Bartl2022,Rachev1998,shapiropaper}.
The main result of this section,
\Cref{thm:consistency} presented below, generalizes this result to a broader class of distributions.
Notably, we use \Cref{thm:consistency} to demonstrate that MC and scrambled net integration
consistently approximate risk functionals defined on Lebesgue spaces.

\begin{theorem}[Consistent Approximations of Law Invariant Functionals]\label{thm:consistency}
Let $\pbsomega$ be nonatomic.
Consider a law invariant, continuous functional
$\rho: \mathcal{L}^p\pbsomega \to \mbbr$.
Let $\mu, \mu_1, \mu_2, \ldots$ be probability measures on $\mbbr$, and let
$\int_{\mbbr} \abs{t}^p \, \du \mu(t) < \infty$.
    Further, suppose that
    \begin{equation}
    \label{eq:weakmoment}
        \weaklyto{\mu_N}{\mu}\quad\text{and}\quad \int_{\mbbr} \abs{t}^p \du\mu_N(t)
        \to \int_{\mbbr} \abs{t}^p \du\mu(t)
    \end{equation}
    as $N \to \infty$. Then
    \begin{equation*}
        \rho(\mu_N) \to \rho(\mu)\asto{N}.
    \end{equation*}
\end{theorem}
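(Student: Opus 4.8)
The plan is to reduce the claim to an $\mathcal{L}^p$-convergence statement through the quantile representation of the measures and then invoke the continuity of $\rho$. Since $\pbsomega$ is nonatomic, there exists $U \colon (\Omega,\mathcal{F},P) \to (0,1)$ with $U \sim \text{Uniform}(0,1)$. Following \Cref{lawinvariance}, I would set $Z_N := H_{\mu_N}(U)$ and $Z := H_{\mu}(U)$, so that $Z_N \sim \mu_N$ and $Z \sim \mu$. The moment condition in \eqref{eq:weakmoment} (which presupposes $\int_{\mbbr}\abs{t}^p\du\mu_N(t) < \infty$ for large $N$) together with $\int_{\mbbr}\abs{t}^p\du\mu(t) < \infty$ guarantees $Z_N, Z \in \mathcal{L}^p(\Omega)$, and law invariance of $\rho$ yields $\rho(\mu_N) = \rho(Z_N)$ and $\rho(\mu) = \rho(Z)$. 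By continuity of $\rho$ on $\mathcal{L}^p(\Omega)$ it then suffices to prove $\| Z_N - Z \|_p \to 0$, that is
\[
    \int_0^1 \abs{H_{\mu_N}(u) - H_\mu(u)}^p \du u \to 0 \asto{N}.
\]

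To establish this convergence of quantile functions I would combine two consequences of \eqref{eq:weakmoment}. First, weak convergence $\weaklyto{\mu_N}{\mu}$ is equivalent to the convergence $H_{\mu_N}(u) \to H_\mu(u)$ at every continuity point $u$ of $H_\mu$; as the nondecreasing function $H_\mu$ has at most countably many discontinuities, this gives $H_{\mu_N} \to H_\mu$ $\lambda$-almost everywhere on $(0,1)$. Second, the change of variables $t = H_{\mu_N}(u)$ turns the moment hypothesis into
\[
    \int_0^1 \abs{H_{\mu_N}(u)}^p \du u = \int_{\mbbr} \abs{t}^p \du\mu_N(t) \to \int_{\mbbr} \abs{t}^p \du\mu(t) = \int_0^1 \abs{H_\mu(u)}^p \du u,
\]
so the $\mathcal{L}^p(0,1)$ norms converge to the finite limit $\| H_\mu \|_p$.

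With almost everywhere convergence and convergence of the $p$-th norms in hand, I would conclude $H_{\mu_N} \to H_\mu$ in $\mathcal{L}^p(0,1)$ by the Riesz--Scheff\'e theorem (equivalently, a consequence of the Brezis--Lieb lemma): for an almost everywhere convergent sequence, convergence of the norms to the finite limiting norm upgrades the convergence to strong $\mathcal{L}^p$ convergence. This completes the chain $\rho(\mu_N) = \rho(Z_N) \to \rho(Z) = \rho(\mu)$.

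The step I expect to be the main obstacle is precisely this last implication: almost everywhere convergence together with convergence of norms does not by itself yield $\mathcal{L}^p$ convergence without the correct uniform-integrability argument, so care is needed in invoking the appropriate form of Riesz--Scheff\'e/Brezis--Lieb. An equivalent route that outsources this technical work is to observe that the two conditions in \eqref{eq:weakmoment} are exactly the standard characterization of convergence in the Wasserstein-$p$ metric $W_p$, and that on $\mbbr$ one has $W_p(\mu_N,\mu)^p = \int_0^1 \abs{H_{\mu_N}(u) - H_\mu(u)}^p \du u$; the desired limit then follows directly from optimal-transport results.
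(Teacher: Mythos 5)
Your proposal is correct and follows essentially the same route as the paper: the paper likewise reduces the claim via the coupling $Z_N = H_{\mu_N}(U)$, $Z = H_\mu(U)$ and law invariance plus continuity of $\rho$, obtaining $\int_{(0,1)} \abs{H_{\mu_N}(t)-H_{\mu}(t)}^p \du t \to 0$ directly from Theorem~1.4.1 in \cite{Rachev1998}, which is precisely the Wasserstein-$p$ characterization you mention at the end, so your Riesz--Scheff\'e argument simply proves that cited step from scratch. The obstacle you flag is not actually one: almost everywhere convergence together with convergence of the $\mathcal{L}^p$ norms to the finite limiting norm does imply strong $\mathcal{L}^p$ convergence with no further uniform-integrability hypothesis (apply Fatou's lemma to the nonnegative functions $2^{p-1}(\abs{H_{\mu_N}}^p + \abs{H_\mu}^p) - \abs{H_{\mu_N}-H_\mu}^p$), so your self-contained version is complete as written.
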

\begin{proof}
Theorem~1.4.1 in \cite{Rachev1998} yields
    \begin{equation*}
        \int_{(0,1)} \abs{H_{\mu_N}(t)-H_{\mu}(t)}^p \du t \to0\asto{N}.
    \end{equation*}
Thus, for $U\sim \text{Uniform}(0,1)$ defined on $(\Omega, \mathcal{F},P)$, $H_{\mu_N}(U)$ converges to $H_{\mu}(U)$ in $\mathcal{L}^p(\Omega)$. Law
invariance and continuity of $\rho$ yield the result (see \Cref{lawinvariance}).
\end{proof}

Empirical means defined by standard
MC sampling satisfy a SLLN
and the corresponding
empirical distributions converge weakly, under mild assumptions.
These facts can be used to verify the convergence
statements in \eqref{eq:weakmoment} for MC sampling. More generally, the conditions in \eqref{eq:weakmoment} hold for all measures generated by a sampling method that satisfies a SLLN-type condition. We formulate this SLLN-type condition in the assumption below and demonstrate how it can
be used to verify \eqref{eq:weakmoment} in the next lemma.

\begin{assumption}\label{assumption:slln}
Let $(\xi^i)_{i \in \mbbn}$ be
a sequence of $\Xi$-valued random elements
defined on a probability space $(\Theta, \mathcal{A}, \Pb)$. For each random variable $\vartheta: \Xi \to \mbbr$ with $\vartheta \circ \xi \in
\mathcal{L}^{1 + \varepsilon/p}(\Omega)$, we have
\begin{equation*}
    \frac{1}{N}\sum_{i=1}^N \vartheta(\xi^i(\theta)) \to \int_{\Omega} \vartheta\circ \xi \du P \asto{N}
    \quad\text{for }\Pb\text{-almost all }\theta \in \Theta.
\end{equation*}
\end{assumption}

Let \Cref{assumption:slln} hold.
For $\theta \in \Theta$ and $N \in \mathbb{N}$, let us define the probability measure $\nu_{N, \theta}$ on $\Xi$ by
\begin{equation}\label{eq:nuNtheta}
    \nu_{N, \theta}(B) \coloneqq \frac{1}{N} \sum_{i=1}^N \mathbb{1}_{\xi^i(\theta)}(B)
\end{equation}
for each $B \in \mathcal{B}(\Xi)$.
We show that \Cref{assumption:slln}
allows us to verify the conditions in \eqref{eq:weakmoment}.
In a later section, we also reuse \Cref{assumption:slln} to formulate our uniform law of large numbers.

\begin{lemma}
\label{lem:weakmoment}
Let \Cref{assumption:slln} hold, and
let $Z \colon \Xi \to \mathbb{R}$
be a random variable with $Z \circ \xi \in \mathcal{L}^{p+\varepsilon}(\Omega)$.
Let
$\mu_{N,\theta} \coloneqq \nu_{N, \theta} \circ Z^{-1}$ and let
$\mu$ be the distribution of $Z \circ \xi$.
Then $\weaklyto{\mu_{N,\theta}}{\mu}$
and
$\int_{\mbbr} \abs{t}^p \du\mu_{N,\theta}(t)
        \to \int_{\mbbr} \abs{t}^p\du\mu(t)$
for $\mathbb{P}$-almost all $\theta \in \Theta$.
\end{lemma}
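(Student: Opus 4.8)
The plan is to derive both claims directly from \Cref{assumption:slln} by applying it to suitably chosen test random variables on $\Xi$. The key observation is that pushing the empirical measure $\nu_{N,\theta}$ forward through $Z$ and then integrating a test function $g\colon \mbbr \to \mbbr$ turns into a sample average: with $\vartheta \coloneqq g \circ Z$ one has $\int_{\mbbr} g \du\mu_{N,\theta} = \frac{1}{N}\sum_{i=1}^N \vartheta(\xi^i(\theta))$, while the corresponding target integral is $\int_{\mbbr} g \du\mu = \int_\Omega \vartheta \circ \xi \du P$, using that $\mu$ is the law of $Z \circ \xi$. Thus each admissible $\vartheta$ produces, via \Cref{assumption:slln}, the convergence of one such integral on a $\Pb$-full-measure set of $\theta$; the whole argument reduces to choosing the right $g$'s and then cutting down to countably many of them so that a single null set can be discarded.

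For the moment convergence I would take $g(t) = \abs{t}^p$, equivalently $\vartheta = \abs{Z}^p$. Then $\vartheta \circ \xi = \abs{Z \circ \xi}^p$ and
\[
    \int_\Omega \bigl(\abs{Z \circ \xi}^p\bigr)^{1+\varepsilon/p} \du P
    = \int_\Omega \abs{Z \circ \xi}^{p+\varepsilon} \du P < \infty,
\]
so that $\vartheta \circ \xi \in \mathcal{L}^{1+\varepsilon/p}(\Omega)$. This is exactly where the stronger hypothesis $Z \circ \xi \in \mathcal{L}^{p+\varepsilon}(\Omega)$, rather than mere $p$th-power integrability, enters, since it is what makes $\vartheta$ admissible in \Cref{assumption:slln}. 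Applying the assumption and rewriting the sample average $\frac{1}{N}\sum_{i=1}^N \abs{Z(\xi^i(\theta))}^p$ and its limit $\int_\Omega \abs{Z\circ\xi}^p \du P$ as pushforward integrals yields $\int_{\mbbr} \abs{t}^p \du\mu_{N,\theta}(t) \to \int_{\mbbr} \abs{t}^p \du\mu(t)$ for $\Pb$-almost all $\theta$.

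For the weak convergence I would argue through distribution functions, writing $F_Q(x) \coloneqq Q((-\infty,x])$ for a probability measure $Q$ on $\mbbr$. Fixing $x \in \mbbr$ and taking $g = \mb{1}_{(-\infty, x]}$, the random variable $\vartheta = \mb{1}_{(-\infty,x]} \circ Z$ is bounded, hence $\vartheta \circ \xi \in \mathcal{L}^{1+\varepsilon/p}(\Omega)$ trivially; since $\frac{1}{N}\sum_{i=1}^N \vartheta(\xi^i(\theta)) = F_{\mu_{N,\theta}}(x)$ and $\int_\Omega \vartheta \circ \xi \du P = P(Z \circ \xi \le x) = F_\mu(x)$, \Cref{assumption:slln} gives $F_{\mu_{N,\theta}}(x) \to F_\mu(x)$ for $\Pb$-almost all $\theta$. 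The main obstacle is that this null set depends on $x$ and there are uncountably many $x$. I would circumvent it by restricting $x$ to a countable dense set $D \subset \mbbr$ (say the rationals) and intersecting the corresponding countably many full-measure sets to obtain a single $\Theta_0$ with $\Pb(\Theta_0) = 1$ on which $F_{\mu_{N,\theta}}(x) \to F_\mu(x)$ for every $x \in D$. A standard sandwiching argument using monotonicity of distribution functions then upgrades this to convergence at every continuity point of $F_\mu$: for such a point $x$ and any $x', x'' \in D$ with $x' < x < x''$ one has $F_\mu(x') \le \liminf_{N} F_{\mu_{N,\theta}}(x) \le \limsup_{N} F_{\mu_{N,\theta}}(x) \le F_\mu(x'')$, and letting $x' \uparrow x$ and $x'' \downarrow x$ through $D$, continuity of $F_\mu$ at $x$ forces the common value $F_\mu(x)$. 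Since convergence of distribution functions at all continuity points of the limit characterizes weak convergence on $\mbbr$, this yields $\weaklyto{\mu_{N,\theta}}{\mu}$ for all $\theta \in \Theta_0$.

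Finally I would intersect the full-measure set from the moment step with $\Theta_0$; this intersection still has $\Pb$-measure one, and on it both conclusions hold simultaneously. The only genuinely delicate point is the reduction from uncountably many test functions to a countable determining family, handled here by the monotone-sandwiching argument; everything else is a direct substitution into \Cref{assumption:slln} together with the change-of-variables identity for pushforward measures. An alternative to the distribution-function route would be to fix a countable family of bounded continuous functions that determines weak convergence on the separable space $\mbbr$ and argue verbatim as in the moment step, but the sandwiching argument keeps the reduction elementary.
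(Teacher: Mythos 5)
Your proof is correct, and its moment half coincides with the paper's: both apply \Cref{assumption:slln} to $\vartheta = \abs{Z}^p$, using that $Z \circ \xi \in \mathcal{L}^{p+\varepsilon}(\Omega)$ gives $\abs{Z\circ\xi}^p \in \mathcal{L}^{1+\varepsilon/p}(\Omega)$, and then rewrite both sides via the pushforward. For the weak-convergence half you take a genuinely different, though architecturally parallel, route: the paper applies the assumption to a fixed countable convergence-determining family $\mathcal{W}$ of bounded continuous functions (Corollary~2.2.6 in \cite{Bogachev2018}) composed with $Z$, and intersects the resulting $\Pb$-full sets; you instead apply it to the indicators $\mathbb{1}_{(-\infty,x]} \circ Z$ for $x$ in a countable dense set, then upgrade convergence of the distribution functions on that dense set to convergence at every continuity point of the limit distribution function by monotone sandwiching, which characterizes weak convergence on $\mathbb{R}$. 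Both reductions to countably many test functions are legitimate because \Cref{assumption:slln} demands only measurability and $\mathcal{L}^{1+\varepsilon/p}$-integrability of $\vartheta \circ \xi$; in particular it tolerates your discontinuous indicator integrands, so no continuity of test functions is needed (and note the convergence at each fixed $x$ in the dense set holds whether or not $x$ is an atom of $\mu$, which is all the sandwich requires). As for what each approach buys: your CDF argument is elementary and self-contained, avoiding the citation of an external countable-family result, but it is tied to laws on the real line; the paper's bounded-continuous-family argument is a single uniform mechanism that treats the weak-convergence step exactly like the moment step and would extend to pushforwards into more general separable metric spaces --- precisely the alternative you anticipate in your closing remark.
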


\begin{proof}
First, we show that
$\weaklyto{\mu_{N, \theta}}{\mu}$
as $N \to \infty$ for $\Pb$-almost all $\theta \in
\Theta$. We denote the countable set of continuous, bounded
functions $W: \mbbr \to \mbbr$ given in
Corollary~2.2.6 in \cite{Bogachev2018} by $\mathcal{W}$.
For each $W \in \mathcal{W}$, $(W \circ Z)\circ \xi \in \mathcal{L}^{\infty}(\Omega)$. Thus,
for each $W \in \mathcal{W}$, \Cref{assumption:slln} yields the existence of
a $\Pb$-full set $\Theta_W$ such that
\begin{equation}\label{eq:jul102024}
    \int_{\Xi} W \circ Z\du\nu_{N, \theta} =
    \frac{1}{N} \sum_{i=1}^N W(Z(\xi^i(\theta))\to \int_{\Omega} (W \circ Z)\circ \xi \du P \asto{N}
\end{equation}
for each $\theta \in \Theta_W$. The set
$\Theta_1 \coloneqq \bigcap_{W \in \mathcal{W}}\Theta_W$
is $\Pb$-full since it is a countable intersection of $\Pb$-full sets.
Now, for each $\theta \in \Theta_1$ and  $W \in \mathcal{W}$, a change of variables on both sides of \eqref{eq:jul102024} yields
\begin{equation*}
      \int_{\mbbr} W \du\mu_{N, \theta}(t)
      \to \int_{\mbbr} W \du \mu(t) \asto{N}.
\end{equation*}
Combined with Corollary~2.2.6 in
\cite{Bogachev2018}, we have
$\weaklyto{\mu_{N, \theta}}{\mu}$
as $N \to \infty$ for all $\theta \in \Theta_1$.
Since $Z \circ \xi \in \mathcal{L}^{p+\varepsilon}(\Omega)$,
we have $|Z(\xi)|^p \in \mathcal{L}^{1+\varepsilon/p}(\Omega)$.
Using \Cref{assumption:slln},
we also obtain the existence of a $\Pb$-full set $\Theta_2$ such that
\begin{equation*}
\int_{\mbbr} \abs{t}^p \du\mu_{N,\theta}(t)
=
    \int_{\Xi} \abs{Z}^p\du\nu_{N,\theta} \to \int_{\Xi}\abs{Z(\xi)}^p\du P
    =
    \int_{\mbbr} \abs{t}^p \du\mu(t)
    \asto{N}
\end{equation*}
for all $\theta \in \Theta_2$.
\end{proof}

\subsection{Examples}
\label{subsec:examples}
We use \Cref{thm:consistency} to derive consistency results
for approximations of risk functionals generated by MC and
QMC sampling, scrambled net integration, and
Latin hypercube sampling.
Importantly, these examples
serve as justification for a pivotal assumption, \Cref{ass:consistency},
used to formulate our epigraphical law of large numbers in later sections.

For the examples below we require the following assumption.
\begin{assumption}\label{ass:examples}
The probability space $(\Omega, \mathcal{F},P)$ is nonatomic, and
the risk measure $\rho:\mathcal{L}^p(\Omega, \mathcal{F},P) \to \mbbr$
is law invariant and continuous.
\end{assumption}

First, we demonstrate that \Cref{thm:consistency}
is compatible with classical MC sampling.
\begin{example}[MC-based measures]\label{exp:mcmeasures}
Let $Y  \in\mathcal{L}^p(\Omega,\mathcal{F}, P)$.
Let $Y_1, Y_2, \ldots $ be independent, identically
distributed (iid) random variables defined on a
probability space $(\Theta, \mathcal{A}, \Pb)$ with $Y_1\sim Y $.
We define the empirical distribution corresponding to
$Y_1, Y_2, \ldots, Y_N$ by
\begin{equation*}
    \pi_{N, \theta}(B) \coloneqq \frac{1}{N} \sum_{i=1}^N \mb{1}_{Y_i(\theta)}(B)
\end{equation*}
for each $B \in \mathcal{B}(\mbbr)$ and $\theta \in \Theta$.
Let \Cref{ass:examples} be satisfied.
Using the SLLN and the almost sure weak
convergence of empirical distributions, we find that
\begin{equation*}
            \rho(\pi_{N, \theta}) \to \rho(Y)\asto{N}
            \quad\text{for } \Pb\text{-almost all } \theta \in \Theta.
\end{equation*}
This convergence statement has already been established in
\cite[pp.\ 32--33]{Rachev1998}, Theorem~2.1 in
\cite{shapiropaper},
and Theorem~9.65 in \cite{Shapiro2021}.
\end{example}

It is worth noting that
MC sampling can also be examined using \Cref{lem:weakmoment}.
The resulting formulation is consistent with that of
\Cref{ass:consistency} in the next section.

\begin{example}[MC-based measures, continued]\label{exp:mcmeasurescontinued}
Let
$(\xi^i)_{i \in \mathbb{N}}$
be a sequence of iid random elements on $(\Theta, \mathcal{A}, \Pb)$
with $\xi^1 \sim \xi$.
Let \Cref{ass:examples} be satisfied.
Owing to the SLLN, \Cref{assumption:slln} is met
for $\varepsilon = 0$.
Now, \Cref{lem:weakmoment,thm:consistency} ensure
that for each random variable $Z \colon \Xi \to \mbbr$ with
$Z \circ \xi \in \mathcal{L}^{p}(\Omega)$, we have
\begin{equation*}
    \rho(\nu_{N, \theta} \circ Z^{-1}) \to \rho(Z(\xi))\asto{N}
    \quad\text{for } \Pb\text{-almost all } \theta \in \Theta,
\end{equation*}
where $\nu_{N, \theta}$ as in \eqref{eq:nuNtheta} with iid $(\xi^i)_{i \in \mbbn}$.
\end{example}

The next example is in the spirit of QMC methods.
QMC methods typically approximate the Lebesgue measure on
the unit cube using discrete measures
defined by deterministically generated points,
also referred to as low-discrepancy sequences.
Roughly speaking, low-discrepancy sequences are
designed to cover the integration domain evenly.
As a consequence, these deterministic measures weakly
converge to the Lebesgue measure.
While QMC methods avoid the potentially unfavorable clustering of
points that can occur with the vanilla MC method, they typically require
some smoothness assumptions on the integrand. We demonstrate
a consistency result for weakly convergent measures in the context
of risk measures, provided that a  smoothness condition is satisfied.


\begin{example}[Weakly convergent measures]\label{exp:weakconvergence}
Let $\nu$ be a probability measure on
$(\Xi, \mathcal{B}(\Xi))$. Suppose $Z$ is a real, $\nu$-almost surely continuous, and
bounded random variable defined on $(\Xi, \mathcal{B}(\Xi), \nu)$. Let $\nu_1,\nu_2,
\ldots$ be probability measures with
\begin{equation*}
    \weaklyto{\nu_N}{\nu}\asto{N}.
\end{equation*}
We have
\begin{equation*}
        \rho(\nu_{N}\circ Z^{-1}) \to \rho(\nu \circ Z^{-1})\asto{N}.
\end{equation*}
Let us verify this assertion.
Owing to the Continuous Mapping Theorem (see Theorem~13.25 in \cite{Klenke2014}), we
have
$$
    \weaklyto{\nu_N\circ Z^{-1}}{\nu \circ Z^{-1}}\asto{N}.
$$
Since $\abs{Z}^p$ is $\nu$-almost surely continuous and bounded, we obtain
\begin{equation*}
    \lim_{N \to \infty}\int_{\Xi}\abs{Z_N}^p \du\nu_N = \int_{\Xi}\abs{Z}^p \du\nu
\end{equation*}
(see Corollary~2.2.10 in \cite{Bogachev2018}). Now,
\Cref{thm:consistency} implies the assertion.
\end{example}

Many RQMC methods randomize
deterministic, low-discrepancy points in a way that
guarantees uniform distribution of the randomized points
while maintaining their low-discrepancy property. In the next
example, we consider measures that are obtained by randomizing
digital nets on $[0, 1]^d$ with a uniform nested scramble
\cite{Owen2021}. Notably, the uniform nested scramble can be used to randomize Sobol' sequences, which are $(t,d)$-sequences in base $2$ (see Definition 3.2 in \cite{Owen2021}).

\begin{example}[Scrambled Net Integration]\label{exmp:rqmc}
Let us give details on the RQMC methods considered in this example.
Let $(y^i)_{i \in \mbbn}\subset [0,1]^d$ be
a $(t,d)$-sequence in integer base $b \geq 2$ (see Definition~3.2 in \cite{Owen2021}), where $t \geq 0$ is an integer.
Moreover, suppose that the maximum gain coefficient of
the sequence is bounded by a finite constant.
Let the sequence $(\xi^i)_{i \in \mbbn}$ of random
vectors be obtained through randomization of the
sequence
$(y^i)_{i \in \mbbn}$ using the method
described in \cite{owen1995randomly}.
The random vectors $\xi^1, \xi^2, \ldots$ are defined on a probability space $(\Theta, \mathcal{A}, \mathbb{P})$,
and uniformly distributed on $[0,1]^d$, but are not independent. Let $q \in (1, +\infty]$.
Theorem~5.3 in \cite{Owen2021} ensures
that for each $\vartheta \in \mathcal{L}^q[0,1]^d$, we have
\begin{equation*}
  \frac{1}{N}\sum_{i=1}^N \vartheta(\xi^i (\theta)) \to \int_{[0,1]^d} \vartheta \du\lambda\asto{N}
  \quad\text{for } \Pb\text{-almost all } \theta \in \Theta.
\end{equation*}

Let  the probability measure $\nu_{N, \theta}$ be as in \eqref{eq:nuNtheta}, with the sequence $(\xi^i)_{i \in \mbbn}$ as described above.
We consider $\varepsilon > 0$ and
 $q = 1 + \varepsilon/p$.
Let $Z \in \mathcal{L}^{p + \varepsilon}[0,1]^d$ and let
$\xi \sim \text{Uniform}[0,1]^d$. Then, the
above SLLN, and
\Cref{lem:weakmoment,thm:consistency}
with $\Xi = [0,1]^d$ yield
\begin{equation*}
    \rho(\nu_{N, \theta} \circ Z^{-1}) \to \rho(Z(\xi)) \asto{N}
    \quad\text{for } \Pb\text{-almost all } \theta \in \Theta.
\end{equation*}
\end{example}

Next, we demonstrate consistent estimation of risk functionals
via Latin hypercube sampling, which is
a popular variance reduction scheme
used in stochastic programming.

\begin{example}[{Latin hypercube sampling}]
    Let the sequence $(\xi^i)_{i \in \mbbn} d$ of random
    vectors be obtained through Latin hypercube
    sampling as described in
    Section~1 in \cite{Loh1996}
    (see also Section~5.5.1 in \cite{Shapiro2021}).
    The random vectors $\xi^1, \xi^2, \ldots$ are defined on a probability space $(\Theta, \mathcal{A}, \mathbb{P})$,  and uniformly distributed on $[0,1]^d$, but are not independent.
    Then,    Theorem~3 in \cite{Loh1996}
    ensures that for each $\vartheta \in \mathcal{L}^2[0,1]^d$, we have
    \begin{equation*}
      \frac{1}{N}\sum_{i=1}^N \vartheta(\xi^i (\theta)) \to \int_{[0,1]^d} \vartheta \du\lambda\asto{N}
      \quad\text{for } \Pb\text{-almost all } \theta \in \Theta.
    \end{equation*}
    Let \Cref{ass:examples} be satisfied.
    Moreover,
    let  the probability measure $\nu_{N, \theta}$ be as in \eqref{eq:nuNtheta}, where $(\xi^i)_{i \in \mbbn}$
    is generated using Latin hypercube sampling.
    Suppose that
    $\xi \sim \text{Uniform}[0,1]^d$.
    Then, the above SLLN
    and \Cref{lem:weakmoment,thm:consistency} with $\Xi = [0,1]^d$ ensure
    that for each random variable $Z \colon [0,1]^d \to \mbbr$ with
    $Z \circ \xi \in \mathcal{L}^{2p}(\Omega)$, we have
    \begin{equation*}
        \rho(\nu_{N, \theta} \circ Z^{-1}) \to \rho(Z(\xi))\asto{N}
        \quad\text{for } \Pb\text{-almost all } \theta \in \Theta.
    \end{equation*}
\end{example}

\section{Epiconvergence of Approximations to Composite Risk Functionals}
\label{sec:epiconvergence}
In this section, we establish our main result,
an almost sure epiconvergence result for composite risk functionals
under consistent pointwise approximations.
Our epiconvergence result is based on two technical assumptions.
The first assumption is based on the consistency
framework developed in
\cite{shapiropaper}.
The second assumption can be viewed as a SLLN,
which is satisfied for probability measures
generated by MC sampling or scrambled net integration.

\begin{assumption}\label{ass:big1}
~
  \begin{enumerate}[nosep, label={(\ref{ass:big1}.\arabic*)}, ref={\ref{ass:big1}.\arabic*}]
        \item \label{ass:big1:item1} The metric space $X$ is separable, and $(\Omega, \mathcal{F},P)$  is
        complete and nonatomic.
        \item
         The map $F: X \times \Xi \to [0, +\infty)$ is random lower semicontinuous.

        \item
            For every $x \in X$, the random variable $F_x(\xi) \coloneqq F(x, \xi)$ is in $ \mathcal{L}^{p+\varepsilon}(\Omega, \mathcal{F}, P)$.

        \item
        The risk measure $\rho \colon \mathcal{L}^p(\Omega, \mathcal{F},P) \to \mbbr$ is convex, and law invariant.
    \end{enumerate}
\end{assumption}

The next assumption formulates a SLLN-type consistency condition,
inspired by the examples considered in \Cref{subsec:examples}.

\begin{assumption}
\label{ass:consistency}
    Let $(\Theta, \mathcal{A}, \Pb)$
    be a probability space.
    For each $\theta \in \Theta$,
    $(\nu_{N, \theta})_{N \in \mbbn}$ is a sequence of probability measures on $(\Xi, \mathcal{B}(\Xi))$.
    For
    each random variable $Z: \Xi \to \mbbr$ with
    $Z \circ \xi \in \mathcal{L}^{p+\varepsilon}(\Omega, \mathcal{F}, P)$, we have
    \begin{equation}\label{assslln}
            \lim_{N \to \infty} \rho(\nu_{N, \theta}\circ Z^{-1}) = \rho(Z(\xi))
            \quad\text{for } \Pb\text{-almost all } \theta \in \Theta.
    \end{equation}

\end{assumption}

Probability measures obtained through MC sampling satisfy \Cref{ass:consistency} with \(\varepsilon = 0\) (see \Cref{exp:mcmeasures}).
Moreover, measures derived from scrambled net integration satisfy \Cref{ass:consistency} with \(\varepsilon > 0\) (see \Cref{exmp:rqmc}).

We now formulate the main result of this section.

 \begin{theorem}[Epiconvergence of Approximations to Risk Functionals]\label{thm:epiconvergence}
    Let \Cref{ass:consistency,ass:big1} be satisfied.
    Then, the sequence of functions
    \begin{equation*}
        x \mapsto \rho(\nu_{N, \theta}\circ F_x^{-1})
    \end{equation*}
    epiconverges to
    \begin{equation*}
        x \mapsto \rho(F_x(\xi))
    \end{equation*}
    for $\Pb$-almost every $\theta \in \Theta$.
\end{theorem}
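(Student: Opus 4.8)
The plan is to verify the two defining conditions of epiconvergence from \Cref{sec:notation} — the liminf (lower) inequality and the existence of a recovery sequence — after fixing a single $\Pb$-full set of $\theta$ that works simultaneously for all $x \in X$. Write $f_{N,\theta}(x) \coloneqq \rho(\nu_{N,\theta}\circ F_x^{-1})$ and $f(x)\coloneqq\rho(F_x(\xi))$; note $f$ is real-valued because $F_x(\xi)\in\mathcal L^{p+\varepsilon}\subset\mathcal L^p$ and $\rho$ is real-valued. The key preparatory step is to select two countable sets and intersect countably many $\Pb$-full sets supplied by \Cref{ass:consistency}. First, I fix a countable dense $D_1\subset X$ (possible since $X$ is separable) for use in the lower bound. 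Second, since $X\times\mathbb R$ is separable, the graph $\{(x,f(x)):x\in X\}$ has a countable dense subset; I let $D_0\subset X$ collect the corresponding first coordinates, so that every $(x,f(x))$ is a limit of points $(y,f(y))$ with $y\in D_0$. Applying \Cref{ass:consistency} to $Z=F_y$ for each $y\in D_0$ (legitimate because $F_y(\xi)\in\mathcal L^{p+\varepsilon}$ by \Cref{ass:big1}) and to the minorants introduced below for each $(y,m)\in D_1\times\mathbb N$, and intersecting these countably many full sets, yields a single $\Pb$-full $\Theta^*$; I fix $\theta\in\Theta^*$ for the remainder.

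For the lower bound, I introduce for $y\in D_1$, $m\in\mathbb N$ the minorant $\underline F_{y,m}(\cdot)\coloneqq\inf_{x'\in\overline B(y,1/m)}F(x',\cdot)$, which is measurable as the infimal value function of the random lower semicontinuous $F$ and satisfies $0\le \underline F_{y,m}\le F_y$, so $\underline F_{y,m}\circ\xi\in\mathcal L^{p+\varepsilon}$ by the nonnegativity and integrability in \Cref{ass:big1}. Given $x_N\to x$ and $\delta>0$, choosing $m$ with $1/m<\delta/2$ and $y\in D_1$ with $d(x,y)<1/m$ yields $\overline B(y,1/m)\subset B(x,\delta)$ and $x_N\in\overline B(y,1/m)$ for all large $N$, hence $F(x_N,\cdot)\ge\underline F_{y,m}$ pointwise on $\Xi$. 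Since $(\Omega,\mathcal F,P)$ is nonatomic, realizing the two pushforward distributions through the quantile transform of a common uniform variable converts this pointwise domination into first-order stochastic dominance, so monotonicity and law invariance of $\rho$ give $f_{N,\theta}(x_N)\ge\rho(\nu_{N,\theta}\circ\underline F_{y,m}^{-1})$. Passing to the liminf and using the convergence built into $\Theta^*$ gives $\liminf_N f_{N,\theta}(x_N)\ge\rho(\underline F_{y,m}(\xi))\ge\rho(\underline F_{x,\delta}(\xi))$, where $\underline F_{x,\delta}\coloneqq\inf_{x'\in\overline B(x,\delta)}F(x',\cdot)$ and the last step is again monotonicity. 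Finally, lower semicontinuity of $F(\cdot,\xi)$ gives $\underline F_{x,\delta}(\xi)\uparrow F_x(\xi)$ as $\delta\downarrow0$, so dominated convergence yields $\mathcal L^p$-convergence and continuity of the (convex, real-valued) risk measure $\rho$ gives $\liminf_N f_{N,\theta}(x_N)\ge f(x)$.

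For the recovery sequence, I fix $x$ and use the choice of $D_0$ to obtain $y_k\in D_0$ with $y_k\to x$ and $f(y_k)\to f(x)$. Since $\theta\in\Theta^*$, $f_{N,\theta}(y_k)\to f(y_k)$ as $N\to\infty$ for each $k$, so I pick an increasing $N_1<N_2<\cdots$ with $f_{N,\theta}(y_k)\le f(y_k)+1/k$ for all $N\ge N_k$, and set $x_N\coloneqq y_k$ for $N_k\le N<N_{k+1}$. Then $x_N\to x$ and $\limsup_N f_{N,\theta}(x_N)\le\lim_k f(y_k)=f(x)$, the required recovery inequality; together with the lower bound this establishes epiconvergence on $\Theta^*$.

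I expect the crux to be the recovery sequence. Because $F$ is only random lower semicontinuous, supremum-type majorants need neither be measurable nor $p$-integrable, so the argument symmetric to the lower bound is unavailable, and \Cref{ass:consistency} supplies pointwise convergence only off an $x$-dependent null set, which cannot be controlled over uncountably many $x$. The device that circumvents both difficulties is to replace density in $X$ by density of a countable set in the \emph{graph} of $f$, furnishing for every $x$ an approximating sequence along which $f$ is recovered and on which convergence holds for the single full set $\Theta^*$. A secondary technical point to treat carefully is the passage from the pointwise inequality $F(x_N,\cdot)\ge\underline F_{y,m}$ to the corresponding inequality between risk values, which relies on nonatomicity to realize distributions via quantile functions and on the monotonicity contained in \Cref{ass:big1}.
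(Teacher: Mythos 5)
Your proof is correct, and it reaches the conclusion by a genuinely different route in its liminf half. The paper does not verify the sequential liminf condition directly: it invokes the characterization of the epigraphical lower limit via $k$-Lipschitz regularizations (Proposition~12.1.1 in \cite{Attouch2014}), moves the penalty $kd_X(x,x')$ inside $\rho$ using \emph{translation equivariance} together with \eqref{eq:invscdftranslation}, applies \Cref{ass:consistency} to the countably many regularized integrands $\xi \mapsto \inf_{x'\in X}\{F_{x'}(\xi)+kd_X(x,x')\}$ with $x$ in a countable dense set, and then needs an extra extension step --- upper semicontinuity of the left-hand side of \eqref{eq:lipschitz} via a Lipschitz estimate (Theorem~9.13 in \cite{DalMaso1993}) and continuity of its right-hand side --- to pass from the dense set to all of $X$, finishing with the identity \eqref{eq:jul22024314} via Proposition~3.3 in \cite{Hess1996}. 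Your ball minorants $\inf_{x'\in \overline B(y,1/m)}F(x',\cdot)$ achieve the same localization more directly: because the domination $F(x_N,\cdot)\geq \inf_{x'\in\overline B(y,1/m)}F(x',\cdot)$ holds \emph{exactly} (not merely up to a distance penalty) once $x_N$ enters the ball, you need neither translation equivariance nor the dense-to-all-$x$ extension, only the same ingredients the paper also relies on: measurability and $\mathcal{L}^{p+\varepsilon}$-integrability of infimal functions of the random lower semicontinuous, nonnegative $F$ (so \Cref{ass:consistency} applies countably often), the quantile coupling \eqref{xgeqy} converting pointwise domination on $(\Xi,\nu_{N,\theta})$ into monotonicity of $\rho$ via law invariance and nonatomicity, and monotone plus dominated convergence with continuity of the real-valued convex $\rho$ (Proposition~6.7 in \cite{Shapiro2021}) to let $\delta\downarrow 0$. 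For the limsup half your argument is structurally the paper's: the countable set obtained from separability of the graph of $x\mapsto\rho(F_x(\xi))$ in $X\times\mathbb{R}$ is precisely the content of Lemma~3 in \cite{zervos1999epiconvergence}, which you prove inline rather than cite --- with the pleasant byproduct that you never need lower semicontinuity of the limit function (which the paper secures via Theorem~3.1 in \cite{shapiropaper}), since density in the graph uses only separability and finiteness of $f$; the ensuing diagonal construction of the recovery sequence matches the paper's. In sum, your route is more elementary and uses fewer axioms of $\rho$ in the liminf step, whereas the paper's regularization route produces the uniform-in-$x$ inequality \eqref{eq:epiconvergence} for the Lipschitz envelopes, which has some independent interest; the only point to state explicitly, at the same level of rigor as the paper's own corresponding assertion, is that the restriction of the random lower semicontinuous $F$ to a fixed closed ball remains random lower semicontinuous, so that its infimal function is indeed measurable.
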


Before we present our proof of \Cref{thm:epiconvergence},
we state some useful facts and introduce notation.
For two random variables $V$ and $W$
defined on the same probability space, we have
\begin{equation}\label{xgeqy}
    V\geq W\quad\text{almost surely}  \quad  \text{implies} \quad  H[V](t)\geq H[W](t)\quad\text{for all }\quad t \in(0,1).
\end{equation}
Moreover, for each $\alpha \in \mbbr$ we have
\begin{equation}
\label{eq:invscdftranslation}
    H[V + \alpha](t) = H[V](t)+\alpha\quad\text{for all}\quad t \in (0,1).
\end{equation}
For each $x \in X$,
let $H_{N, \theta}[F_x]:(0,1) \to \mbbr$ denote the quantile
function corresponding to the distribution $\nu_{N, \theta} \circ F_x^{-1}$.
By the reasoning given in \Cref{lawinvariance}, we have
    \begin{equation*}
        \rho(\nu_{N, \theta}\circ F_{x}^{-1}) = \rho(H_{N, \theta}[F_{x}](U))\quad\text{for each $x \in X$},
    \end{equation*}
where $U$ is a random variable on $\Omega$ with $U\sim\text{Uniform}(0,1)$.
For the sake of brevity, in the proof below, we denote
the random variable $H_{N, \theta}[F_x](U(\cdot)): \Omega \to
\mathbb{R}$ by $H_{N, \theta}[F_x]$ for each $x \in X$.

Finally, we use Lipschitz regularizations to establish epiconvergence.
Let $d_X$ be the metric on $X$. For $k \in \mbbn$ and a
function $g:X \to (-\infty, +\infty]$, we define its
    $k$-Lipschitz regularization on $X$ by
    \begin{equation*}
        x \mapsto \inf_{x' \in X} \{g(x') + kd_X(x,x')\}.
    \end{equation*}
The $k$-Lipschitz regularization of $g$ is Lipschitz continuous with
constant $k$ if $g$ is proper, lower semicontinuous,
conically minorized, and $k$ is sufficiently large
(see Proposition~3.3 in \cite{Hess1996}).

\begin{proof}[Proof of \Cref{thm:epiconvergence}]
    We begin by proving the liminf part of epiconvergence. As in \cite{Hess2019},
    we establish this part using a standard
    liminf characterization of epiconvergence
    based on $k$-Lipschitz
    regularizations (see, e.g.,
    Proposition~12.1.1 in \cite{Attouch2014}).
    Let $X'$ be a dense, countable subset of $X$. Let $x, x' \in X$.
    Using translation equivariance
    of convex risk measures, and \eqref{eq:invscdftranslation}, we obtain
    \begin{equation*}
        \rho(H_{N, \theta}[F_{x'}]) + kd_X(x,x') = \rho(H_{N, \theta}[F_{x'} + kd_X(x, x')]),
    \end{equation*}
    for each $k \in \mathbb{N}$.
    The function $\xi \mapsto \inf_{x' \in X} \{F_x(\xi) + kd_X(x,x')\}$ is measurable owing to the
    random lower semicontinuity of $F \colon X \times \Xi \to [0, +\infty)$.
    Therefore, monotonicity of $\rho$ and \eqref{xgeqy} yield
    \begin{equation*}
        \inf_{x' \in X}\rho(H_{N, \theta}[F_{x'} + kd_X(x, x')])
        \geq \rho(H_{N, \theta}[\inf_{x' \in X} \{F_{x'} + kd_X(x,x')\}]).
    \end{equation*}

    Now fix $x\in X$ and $k \in \mbbn$. Since for each $x \in X$, the map $F_x(\xi(\cdot))$ is in $\mathcal{L}^{p+\varepsilon}(\Omega)$, and $F$ is nonnegative,
    $\inf_{x' \in X} \{F_{x'}(\xi(\cdot)) + kd_X(x,x')\}$ is also
    in $\mathcal{L}^{p+\varepsilon}(\Omega)$. We can therefore apply \eqref{assslln} to
    obtain the existence of a $\Pb$-negligible set $\Delta_{x,k} \subseteq \Theta$ such that
    for all $\theta \in \Theta \setminus \Delta_{x,k}$,
    \begin{align*}
       \lim_{N\to \infty} \rho(H_{N, \theta}[\inf_{x' \in X} \{F_{x'} + kd_X(x,x')\}])
       = \rho(\inf_{x' \in X} \{F_{x'}(\xi)+kd_X(x,x')\}).
    \end{align*}
    We denote the set $\bigcup_{k\geq 1}\bigcup_{x \in X'} \Delta_{x,k}$
    by $\Delta$. Combining the above derivations yields
    \begin{equation}\label{eq:lipschitz}
        \liminf_{N\to\infty}\inf_{x' \in X}\big\{\rho(H_{N, \theta}[F_{x'}]) + kd_X(x,x')\big\}
        \geq \rho(\inf_{x' \in X} \{F_{x'}(\xi)+kd_X(x,x')\})
    \end{equation}
    for each $x\in X'$ and $\theta \in \Theta \setminus \Delta$.

    Now we extend the convergence statement in \eqref{eq:lipschitz} to all
    $x \in X$.
    Since $\rho$ is real-valued and convex, it is continuous
    (see Proposition~6.7 in \cite{Shapiro2021}). Furthermore,
    for fixed $\xi \in \Xi$, the $k$-Lipschitz regularization of
    $x \mapsto F_x(\xi)$ is Lipschitz continuous.
    Thus, owing to the dominated convergence theorem,
    the right-hand side of \eqref{eq:lipschitz} viewed as a function of $x$ is continuous.
    Moreover, we show that the left-hand side
    of \eqref{eq:lipschitz} viewed as a function of $x$ is upper
    semicontinuous. For each $N \in \mbbn$, $\theta \in \Theta$, define
    $h_{N, \theta}(x) \coloneqq \inf_{x' \in X}\{\rho(H_{N, \theta}[F_{x'}]) + kd_X(x,x')\}$.
    Fix a sequence $(x_\ell)_{\ell \in \mathbb{N}}$
    converging to $x$ in $X$.
    Applying Theorem~9.13 in \cite{DalMaso1993} yields
    $
        h_{N, \theta}(x_\ell) \leq h_{N, \theta}(x) + kd_X(x, x_\ell)
    $
    for each $N \in \mbbn$, $\theta \in \Theta$, and $\ell \in \mathbb{N}$.
    Consequently,
    \begin{equation*}
        \limsup_{\ell \to \infty} \liminf_{N \to \infty}  h_{N, \theta}(x_\ell)
            \leq \liminf_{N \to \infty} h_{N, \theta}(x).
    \end{equation*}
    Hence, $x \mapsto \liminf_{N \to \infty}h_{N, \theta}$ is
    upper semicontinuous.
    Therefore, \eqref{eq:lipschitz} holds for
    each $x \in X$ and $\theta \in \Theta \setminus \Delta$.

    We proceed by demonstrating the identity
    \begin{equation}
        \label{eq:jul22024314}
        \sup_{k \geq 1}\rho(\inf_{x' \in X} \{F_{x'}(\xi)+kd_X(x,x')\}) = \rho(F_x(\xi))
    \end{equation}
    for each $x \in X$.
    Fix $x \in X$ and for each $k \in \mbbn$, define the random variable
    $Y^k_x(\omega) \coloneqq \inf_{x'\in X} \{F_{x'}(\xi(\omega)) + kd_X(x,x')\}$.
    Nonnegativity and lower semicontinuity of $F(\cdot, \xi)$ combined with
    Proposition~3.3 in \cite{Hess1996}  ensure
    $
        0 \leq Y^k_x(\omega) \leq \sup_{k\geq 1} Y^k_x(\omega) = F_x(\xi(\omega))
    $
    for each $\omega \in \Omega$ and $k \in \mbbn$.
    Since $F_x(\xi(\cdot)) \in \mathcal{L}^{p +\varepsilon}(\Omega)$,
    the dominated convergence theorem in conjunction
    with the continuity of $\rho$ yields \eqref{eq:jul22024314}.

    Putting together the pieces, we obtain
    \begin{equation}\label{eq:epiconvergence}
        \sup_{k \geq 1}\liminf_{N \to \infty} \inf_{x' \in X}\big\{\rho(\nu_{N, \theta} \circ F_{x'}^{-1}) + kd_X(x, x')\big\}
        \geq \rho(F_x(\xi)).
    \end{equation}
    for all $\theta \in \Theta \setminus \Delta$ and $x \in X$.
    The set $\Delta$ defined above is $\Pb$-negligible since
    it is the countable union of $\Pb$-negligible sets.
    Therefore, we obtain the liminf part of $\Pb$-almost sure epiconvergence by
    combining \eqref{eq:epiconvergence} with the liminf
    characterization of epiconvergence provided in
    Proposition~12.1.1 in \cite{Attouch2014}.

    Next, we establish the limsup part of almost sure epiconvergence,
    following the arguments from the proof of
    Theorem~A.1 in \cite{milz2023asymptotic}.
    The space $X$ is separable, and the map $x \mapsto \rho(F_x(\xi))$ is
    finite-valued and lower semicontinuous \cite[Thm.\ 3.1]{shapiropaper}. Thus, we can apply
    Lemma~3 in \cite{zervos1999epiconvergence} to obtain the
    existence of countable set $\mathcal{Y} \subset X$ such that for
    each $x \in X$ there exists a sequence
    $(x_\ell)_{\ell \in \mbbn}\subset \mathcal{Y}$ with $x_\ell \to x$
    and $\rho(F_{x_\ell}(\xi)) \to \rho(F_x(\xi))$ as
    $\ell \to \infty$.
    Now, for each $x \in X$, \Cref{ass:consistency} guarantees
    the existence of a  $\Pb$-full set $\Theta_x$ such that
    for each $\theta \in \Theta_x$,
    $\rho(H_{N, \theta}[F_x]) \to \rho(F_x(\xi))$ as $N \to \infty$.
    Since $\mathcal{Y}$ is countable, the set
    $\Theta' \coloneqq \bigcap_{x \in \mathcal{Y}}\Theta_x$ is $\Pb$-full.
    Fix $x \in X$, $\theta \in \Theta'$, and a sequence
    $(x_\ell)_{\ell \in \mathbb{N}} \subset \mathcal{Y}$
    such that $x_\ell \to x$ as $\ell \to \infty$.
    For each $\ell \in \mathbb{N}$, we have
    $\rho(H_{N, \theta}[F_{x_\ell}]) \to \rho(F_{x_\ell}(\xi))$ as $N \to \infty$.
    Next, let $(\delta_{\ell})_{\ell \in \mbbn}$ be a positive sequence with
    $\delta_{\ell} \to 0$ as $\ell \to \infty$.
    For each $\ell \in \mathbb{N}$, we obtain the existence of $k_\ell \in \mathbb{N}$, dependent on $\theta$ and $x_\ell$, such that
    $| \rho(H_{N, \theta}[F_{x_{\ell}}]) - \rho(F_{x_\ell}) | \leq \delta_{\ell}$
    for all $N \geq k_{\ell}$.
    We define the sequence $(K_{\ell})_{\ell \in \mathbb{N}}$ by
    $K_1 \coloneqq k_1$ and $K_{\ell+1} \coloneqq \max\{K_\ell + 1, k_{\ell+1}\}$ for $\ell > 1$.
    By construction, we have $K_\ell \to \infty$ as
    $\ell \to \infty$, and thus
    $x_{K_\ell} \to x$ as $\ell \to \infty$.
    Consequently, $\rho(H_{K_\ell, \theta}[F_{x_{K_\ell}}]) \to \rho(F_x(\xi))$ as $\ell \to \infty$.
    Since we can construct such a sequence $(x_{K_\ell})_{\ell \in \mathbb{N}}$ for arbitrary $x \in X$ and $\theta \in \Theta'$, we obtain the almost sure limsup part of epiconvergence.
\end{proof}\section{Uniform Law of Large Numbers for Composite Risk Functionals}

We establish a uniform law of large numbers. Unlike the epigraphical law of
large numbers, it requires additional assumptions on the decision space, the
integrand, and the probability measures $(\nu_{N,\theta})_{N \in \mbbn}$ in
\Cref{ass:consistency}. In the context of uniform convergence, the additional
hypotheses on the decision space and integrand are standard. We require the probability measures
$(\nu_{N,\theta})_{N \in \mbbn}$ on $\Xi$ be as in \eqref{eq:nuNtheta} and satisfy the
SLLN formulated in \Cref{assumption:slln}, conditions met by MC and scrambled net integration.
Subsequently, we
apply the uniform law of large numbers to demonstrate the consistency of
sample-based approximations of risk-averse stochastic variational inequalities.

The following assumption builds on \Cref{ass:big1}.
\begin{assumption}\label{ass:big2}
~
    \begin{enumerate}[nosep, label={(\ref{ass:big2}.\arabic*)}, ref={\ref{ass:big2}.\arabic*}]
    \item  The metric space $X$ is compact,
    and $(\Omega, \mathcal{F},P)$  is
 complete and nonatomic.
    \item
        The function $F: X \times \Xi \to \mbbr$ is Carath\'eodory mapping.

    \item The map $x \mapsto F_x(\xi) \coloneqq F(x, \xi)$
        from $X$ to $\mathcal{L}^p(\Omega, \mathcal{F}, P)$
        is continuous.

    \item
    The random variable $\kappa \colon \Xi \to \mathbb{R}$
    satisfies $\kappa \circ \xi \in \mathcal{L}^{p + \varepsilon}(\Omega, \mathcal{F}, P)$,
    and $\abs{F(x, \xi)} \leq \kappa(\xi)$ for each $x \in X$ and $\xi \in \Xi$.

    \item The risk measure $\rho \colon \mathcal{L}^p(\Omega, \mathcal{F}, P) \to \mathbb{R}$ is convex, law invariant, and Lipschitz continuous.
    \end{enumerate}
\end{assumption}

Now, we formulate and establish a uniform law of large numbers. We recall the definition of the probability measures $(\nu_{N,\theta})_{N \in \mbbn}$ from \eqref{eq:nuNtheta}.

\begin{theorem}[{Uniform Convergence of Sample-based Risk Functionals}]
\label{thm:uniformlln}
If \Cref{ass:big2,assumption:slln}
hold, then
\begin{equation*}
          \sup_{x \in X} \;\abs{\rho(\nu_{N, \theta} \circ F_x^{-1})-\rho(F_x(\xi))} \to 0 \asto{N}
    \end{equation*}
    for $\Pb$-almost all $\theta \in \Theta$.
\end{theorem}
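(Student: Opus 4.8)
The plan is to prove uniform convergence by combining pointwise convergence on a countable dense set with a uniform equicontinuity estimate furnished by the Lipschitz continuity of $\rho$, and then to pass to the full statement using compactness of $X$. Write $L$ for the Lipschitz constant of $\rho$ and fix a countable dense subset $X_0 \subseteq X$. For each $x \in X_0$, the random variable $F_x(\xi)$ lies in $\mathcal{L}^{p+\varepsilon}(\Omega)$ because $\abs{F(x,\xi)} \leq \kappa(\xi)$ and $\kappa\circ\xi \in \mathcal{L}^{p+\varepsilon}(\Omega)$; hence \Cref{lem:weakmoment} together with \Cref{thm:consistency} (whose continuity hypothesis holds since Lipschitz continuity implies continuity) yields $\rho(\nu_{N,\theta}\circ F_x^{-1}) \to \rho(F_x(\xi))$ for $\Pb$-almost all $\theta$. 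Intersecting these full sets over the countable index set $X_0$ produces a single full set on which pointwise convergence holds simultaneously for every $x \in X_0$.

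The crux is the equicontinuity estimate
\[
\sup_{d_X(x,x')\leq\delta}\big|\rho(\nu_{N,\theta}\circ F_x^{-1})-\rho(\nu_{N,\theta}\circ F_{x'}^{-1})\big| \leq L\Big(\tfrac1N\textstyle\sum_{i=1}^N w(\xi^i(\theta);\delta)^p\Big)^{1/p},
\]
where $w(\xi;\delta) \coloneqq \sup_{d_X(x,x')\leq\delta}\abs{F(x,\xi)-F(x',\xi)}$ (measurable after restricting the supremum to $X_0$). To derive it, I would exploit that $\nu_{N,\theta}\circ F_x^{-1}$ and $\nu_{N,\theta}\circ F_{x'}^{-1}$ are the marginals of the naturally coupled pair $\big(F(x,\xi^I(\theta)),F(x',\xi^I(\theta))\big)$ with $I$ uniform on $\{1,\dots,N\}$. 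Since $(\Omega,\mathcal{F},P)$ is nonatomic, it supports a pair $(\tilde Y,\tilde W)$ with this same joint law; law invariance gives $\rho(\tilde Y)=\rho(\nu_{N,\theta}\circ F_x^{-1})$ and $\rho(\tilde W)=\rho(\nu_{N,\theta}\circ F_{x'}^{-1})$, while Lipschitz continuity of $\rho$ yields $\abs{\rho(\tilde Y)-\rho(\tilde W)}\leq L\|\tilde Y-\tilde W\|_{\mathcal{L}^p}$, and the coupling identity $\|\tilde Y-\tilde W\|_{\mathcal{L}^p}^p = \frac1N\sum_i\abs{F(x,\xi^i(\theta))-F(x',\xi^i(\theta))}^p$ completes the bound. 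This transport of the coupled law onto $\Omega$ is the step I expect to require the most care, since the Lipschitz bound for $\rho$ is only available for random variables living on $\Omega$.

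It remains to control the modulus $w(\cdot;\delta)$. Because $X$ is compact and $F(\cdot,\xi)$ is continuous, $w(\xi;\delta)\to0$ as $\delta\to0$ for every $\xi$, and $w(\xi;\delta)\leq 2\kappa(\xi)$; thus $w(\cdot;\delta)^p\circ\xi\in\mathcal{L}^{1+\varepsilon/p}(\Omega)$, so \Cref{assumption:slln} gives $\frac1N\sum_i w(\xi^i(\theta);\delta)^p \to \int_\Omega w(\xi;\delta)^p\du P$ for $\Pb$-almost all $\theta$, and dominated convergence shows this limit tends to $0$ as $\delta\to0$. Finally I would assemble the proof on the full set where pointwise convergence holds on $X_0$ and the SLLN holds for $w(\cdot;1/m)^p$ for all $m\in\mbbn$: given $\delta>0$, cover the compact $X$ by finitely many balls of radius $\delta$ centered at points of $X_0$, and for arbitrary $x\in X$ bound $\abs{\rho(\nu_{N,\theta}\circ F_x^{-1})-\rho(F_x(\xi))}$ by the three terms obtained from inserting the nearest center $x_j$. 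The two ``distance'' terms are each controlled by $L(\int_\Omega w(\xi;\delta)^p\du P)^{1/p}$ in the limit (the third via Lipschitz continuity of $\rho$ and $\|F_x(\xi)-F_{x_j}(\xi)\|_{\mathcal{L}^p}\leq (\int_\Omega w(\xi;\delta)^p\du P)^{1/p}$), while the finitely many middle terms vanish by pointwise convergence. Taking $\limsup_{N\to\infty}\sup_{x\in X}$ and then $\delta\to0$ drives the bound to $0$, which is the claim.
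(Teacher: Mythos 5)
Your proposal is correct and is essentially the paper's own argument: the abstract coupling step you flag as delicate is realized in the paper exactly as you would need it, via $S^{\theta}_{x,N}(\omega) = \sum_{i=1}^N F(x,\xi^i(\theta))\, \mathbb{1}_{\mathcal{I}_i}(U(\omega))$ on a shared uniform $U$ (using nonatomicity and law invariance), after which the Lipschitz bound for $\rho$, \Cref{assumption:slln} applied to a $p$-th power modulus of continuity dominated by $2^p\kappa(\xi)^p$, a finite cover of the compact $X$, and a three-term triangle inequality complete the proof just as you outline. The only differences are organizational: you use a single global modulus $w(\cdot;\delta)$ and establish pointwise convergence on a countable dense set up front, whereas the paper works with local moduli $\Delta_k$ centered at the cover points and intersects the resulting $\Pb$-full sets over rational $\delta$ at the end.
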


\begin{proof}
	We adapt the proof of Theorem~9.60 in \cite{Shapiro2021}
    which establishes the standard uniform law of large numbers.
    We note that
    $
        \theta \mapsto \sup_{x \in X}\; \abs{\rho(\nu_{N, \theta} \circ F_x^{-1}) - \rho(F_x(\xi))}
    $
    is measurable because $X$ is separable, and
    $(\theta, x) \mapsto \rho(\nu_{N, \theta} \circ F_x^{-1})-\rho(F_x(\xi))$
    is Carath\'eodory.

    Fix  $y \in X$ and $\delta>0$.
    We demonstrate that there exists a neighborhood $V_{\delta}(y)$
    of $y$ such that for $\Pb$-almost every $\theta \in \Theta$,
    there exists  $N_1^{y,\delta}(\theta) \in \mathbb{N}$
    such that for all $N \geq N_1^{y,\delta}(\theta)$,
    \begin{equation}\label{claim:neighbourhood}
        \sup_{x \in V_{\delta}(y)}
        \abs{\rho(H_{N, \theta}[F_{x}])
        - \rho(H_{N, \theta}[F_{y}])} < \delta.
    \end{equation}
    First, we choose a sequence $(\gamma_k)_{k \in \mbbn}$ with
        $\gamma_k > 0$ and $\gamma_k \to 0$ as $k \to \infty$. We define
        the neighbourhoods $V_k(y) \coloneqq \{x \in X\,|\, d(x,y) < \gamma_k\}$.
        Let
		\begin{align*}
			\Delta_k(\xi) \coloneqq \sup_{x \in V_k(y)}\,
			|F(x,\xi)-F(y,\xi)|^p.
		\end{align*}
		Since $F$ is Carath\'eodory
        and $X$ is separable, $\Delta_k$ is
        measurable. Further,
        $\Delta_k(\xi) \to 0$ as $k \to \infty$
		for every $\xi \in \Xi$.
		Hence the dominated convergence theorem
        when combined with $\Delta_k(\xi) \leq 2^p\kappa(\xi)^p$
        ensures  $\mathbb{E}[\Delta_k(\xi)] \to 0$ as $k \to \infty$.
        Second,  we define the random variable
        \begin{equation*}
    S^{\theta}_{y,N}(\omega) \coloneqq
            \sum_{i=1}^N
            F(y,\xi^i(\theta)) \mathbb{1}_{\mathcal{I}_i}(U(\omega)),
        \end{equation*}
        where $\mathcal{I}_j \coloneqq ((j-1)/N, j/N]$.
        Since $S_{y,N}^{\theta}$
        and $H_{N, \theta}[F_y]$ have the same distribution and
        $\rho$ is law invariant, we have
        $\rho(H_{N, \theta}[F_{x}]) = \rho(S_{x,N}^{\theta})$
        and $\rho(H_{N, \theta}[F_{y}])
            =  \rho(S_{y,N}^{\theta})$.
        Let $L \geq 0$ be the Lipschitz constant of $\rho$. We obtain
        \begin{equation*}
            \sup_{x \in V_k(y)}\,
            | \rho(S_{x,N}^{\theta})-\rho(S_{y,N}^{\theta})|^p
            \leq
            \sup_{x \in V_k(y)} \frac{L^p}{N}\sum_{i=1}^N |F(x,\xi^i(\theta))-F(y,\xi^i(\theta))|^p
            \leq
            \frac{L^p}{N}\sum_{i=1}^N  \Delta_k(\xi^i(\theta)).
        \end{equation*}
        We have $\Delta_k(\xi) \in \mathcal{L}^{1+\varepsilon/p}(\Omega)$ owing to
        $\abs{F(x, \xi)} \leq \kappa(\xi)$ for each $x \in X$, $\xi \in \Xi$,
        and $\kappa \circ \xi \in \mathcal{L}^{p + \varepsilon}(\Omega)$.
		Hence, the SLLN in \Cref{assumption:slln} guarantees the existence of a $\Pb$-full set $\Theta_1^{y,k}$ such that
        \begin{equation}\label{eq:conv}
            \frac{1}{N}\sum_{i=1}^N  \Delta_k(\xi^i(\theta))
            \to
            \mathbb{E}[\Delta_k(\xi)]\asto{N}\quad\text{for each} \quad \theta \in \Theta_1^{y,k}.
        \end{equation}
        Since $\mathbb{E}[\Delta_k(\xi)] \to 0$ as $k \to \infty$,
        we obtain the existence of $K_{\delta} \in \mbbn$ such that
        for all $k \geq K_{\delta}$, we have
        $L^p\mathbb{E}[\Delta_k(\xi)] < \delta^p/2$.
        Combined with \eqref{eq:conv}, we obtain
        for each $\theta \in
        \Theta_1^{y, K_{\delta}}$, the existence of
        $N_1^{y, {\delta}}(\theta) \in \mathbb{N}$ such that for all
        $N \geq N_1^{y, \delta}(\theta)$,
        $
            (L^p/N)\sum_{i=1}^N \Delta_{K_{\delta}}({\xi^i(\theta)})
            \leq \delta^p.
        $
         This concludes the verification of \eqref{claim:neighbourhood}.

        Next, owing to
        \Cref{assumption:slln,lem:weakmoment}, and
        $F_y(\xi) \in \mathcal{L}^{p + \varepsilon}(\Omega)$,
        we obtain the existence of a $\Pb$-full set $\Theta^{y}_2$
        such that for each $\theta \in \Theta^{y}_2$,
        there exists $N_2^{y,\delta}(\theta) \in \mbbn$ with
        \begin{equation*}
            \abs{\rho(H_{N, \theta}[F_{y}])-\rho(F_{y}(\xi))}
                < \delta
        \end{equation*}
        for all $N \geq N_2^{y,\delta}(\theta)$.
        Furthermore, the continuity of the map $x \mapsto \rho(F_x(\xi))$ guarantees the existence of a neighborhood $W_{\delta}(y)$ of $y$ such that
        \begin{equation*}
            \sup_{x \in W_{\delta}(y)}
            \abs{\rho(F_x(\xi))-\rho(F_{y}(\xi))} < \delta.
        \end{equation*}
        We define
        $\Lambda_{\delta}(y) \coloneqq
            W_{\delta}(y) \cap V_{K_\delta}(y)$,
        and $N^{y, \delta}_*(\theta) \coloneqq
        \max\{N^{y,\delta}_1(\theta), N_2^{y,{\delta}}(\theta)\}$.

        Since $X$ is compact, the cover
        $\bigcup_{y\in X}\Lambda_{\delta}(y)$ has a finite subcover
        with finitely many centers
        $y_1, \ldots, y_{m_{\delta}}$. For each
        $x \in X$, we choose $j\in \{1, \ldots, m_{\delta}\}$ such that
        $x \in \Lambda_{\delta}(y_j)$. Then
        \begin{align*}
            \abs{\rho(H_{N, \theta}[F_x])-\rho(F_x(\xi))} &\leq
            \abs{\rho(H_{N, \theta}[F_x])-\rho(H_{N, \theta}[F_{y_j}])} \\
            &\quad +\abs{\rho(H_{N, \theta}[F_{y_j}]) -\rho(F_{y_j}(\xi))} +
            \abs{\rho(F_{y_j}(\xi)) -\rho(F_x(\xi))}
            \leq 3\delta,
        \end{align*}
        for all $N \geq \max\{N^{y_1, \delta}_{*}(\theta), \ldots, N^{y_m, \delta}_{*}(\theta)\}$
        and $\theta \in \bigcap_{1\leq j\leq m_{\delta}}
        (\Theta^{y_j,K_{\delta}}_1 \cap \Theta^{y_j}_2)$.
        Since the intersection of the latter set over$\delta \in \mathbb{Q}_{>0}$
        is a countable intersection of $\Pb$-full sets, we obtain the convergence statement.
\end{proof}

\subsection{Risk-Averse Stochastic Variational Inequalities}
We use our uniform law of large numbers, \Cref{thm:uniformlln}, to demonstrate the consistency of sample-based approximations to  risk-averse stochastic variational
inequalities.
Let $r \in \mathbb{N}$ and  let $\psi: \mbbr^r \to (-\infty, +\infty]$ be a proper, closed,
and convex function with domain $X$. For each $j \in \{1, \ldots, r\}$,
let $\rho_j: \mathcal{L}^p(\Omega, \mathcal{F},P) \to \mathbb{R}$
be a law invariant risk measure
and $F^j:\mbbr^r \times \Xi \to \mathbb{R}$ are maps.
We consider the risk-averse stochastic variational inequality
\begin{equation}\label{eq:simplevi}
     0 \in \Big[\rho_{1}(F^1_x(\xi)), \ldots, \rho_{r}(F^r_x(\xi))\Big]
        + {\partial\psi(x)}.
\end{equation}
For fixed $\theta \in \Theta$
and $N \in \mathbb{N}$, we define
an empirical estimate of \eqref{eq:simplevi}
by
\begin{equation}\label{eq:empviestimates}
    0 \in \Big[\rho_{1}(\nu_{N, \theta}\circ (F_x^1)^{-1}), \ldots, \rho_{r}(\nu_{N, \theta}\circ (F_x^r)^{-1})\Big]
            + {\partial\psi(x)}.
\end{equation}
If a Uniform Law of Large Numbers in the sense
of \Cref{thm:uniformlln} holds for each $j \in \{1, \ldots, r\}$,
then solutions to \eqref{eq:empviestimates} consistently
approximate solutions of \eqref{eq:simplevi}
as we demonstrate next.

\begin{lemma}[Risk-Averse Stochastic Variational Inequalities]\label{exp:vareq}
Let $\psi: \mbbr^r \to (-\infty, +\infty]$ be a proper, closed,
and convex function with compact domain $X$.
Let \Cref{assumption:slln} hold.
Suppose that for each pair
$(\rho_j, F^j)$, $j \in \{1, \ldots, r\}$, \Cref{ass:big2} is satisfied.
    If for $\Pb$-almost all $\theta \in \Theta$
    there exists $(x_{N,\theta})_{N \in \mbbn} \subset X$ such that
    $x_{N,\theta}$ solves \eqref{eq:empviestimates}
    for all $N \in \mbbn$, then for
    $\Pb$-almost all $\theta \in \Theta$, each accumulation point
    of
    $(x_{N,\theta})_{N \in \mbbn}$
    solves \eqref{eq:simplevi}.
    \end{lemma}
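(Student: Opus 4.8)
The plan is to recast both \eqref{eq:simplevi} and \eqref{eq:empviestimates} as inclusions into the graph of $\partial\psi$ and then pass to the limit using the closedness of that graph, with \Cref{thm:uniformlln} supplying the uniform control needed to make the limit passage work. To this end I would stack the $r$ coordinates into the map $\Phi \colon X \to \mbbr^r$ given by $\Phi(x) = (\rho_1(F^1_x(\xi)), \ldots, \rho_r(F^r_x(\xi)))$ and its sample-based counterpart $\Phi_{N,\theta}(x) = (\rho_1(\nu_{N,\theta}\circ (F^1_x)^{-1}), \ldots, \rho_r(\nu_{N,\theta}\circ (F^r_x)^{-1}))$, so that \eqref{eq:simplevi} becomes $-\Phi(x) \in \partial\psi(x)$ and \eqref{eq:empviestimates} becomes $-\Phi_{N,\theta}(x) \in \partial\psi(x)$.

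First I would establish uniform convergence of $\Phi_{N,\theta}$ to $\Phi$. Since each pair $(\rho_j, F^j)$ satisfies \Cref{ass:big2} and \Cref{assumption:slln} holds, \Cref{thm:uniformlln} applies to each coordinate and yields, for every $j \in \{1, \ldots, r\}$, a $\Pb$-full set on which $\sup_{x \in X} \abs{\rho_j(\nu_{N,\theta}\circ (F^j_x)^{-1}) - \rho_j(F^j_x(\xi))} \to 0$ as $N \to \infty$. Intersecting these finitely many full sets with the full set on which the solution sequence $(x_{N,\theta})_{N \in \mbbn}$ is assumed to exist gives a $\Pb$-full set $\Theta^\star \subseteq \Theta$ on which $\Phi_{N,\theta} \to \Phi$ uniformly on $X$.

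Next I would fix $\theta \in \Theta^\star$, take an accumulation point $x^\star$ of $(x_{N,\theta})_{N \in \mbbn}$ together with a subsequence $x_{N_k,\theta} \to x^\star$ (note $x^\star \in X$, since $X$ is compact and hence closed, so $\psi(x^\star) < +\infty$), and show $\Phi_{N_k,\theta}(x_{N_k,\theta}) \to \Phi(x^\star)$. A triangle inequality splits this into $\abs{\Phi_{N_k,\theta}(x_{N_k,\theta}) - \Phi(x_{N_k,\theta})}$, which vanishes by the uniform convergence on $\Theta^\star$, and $\abs{\Phi(x_{N_k,\theta}) - \Phi(x^\star)}$, which vanishes because $\Phi$ is continuous: each coordinate is the composition of the continuous map $x \mapsto F^j_x(\xi)$ from $X$ into $\mathcal{L}^p(\Omega)$ with the Lipschitz continuous risk measure $\rho_j$, both posited in \Cref{ass:big2}.

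Finally I would pass to the limit in $-\Phi_{N_k,\theta}(x_{N_k,\theta}) \in \partial\psi(x_{N_k,\theta})$ using the closedness of the graph of $\partial\psi$. Because $\psi$ is proper, closed, and convex, the subgradient inequality $\psi(z) \geq \psi(x_{N_k,\theta}) + \langle -\Phi_{N_k,\theta}(x_{N_k,\theta}),\, z - x_{N_k,\theta}\rangle$ holds for every $z \in \mbbr^r$; taking $\liminf_{k \to \infty}$ and using lower semicontinuity of $\psi$ yields $\psi(z) \geq \psi(x^\star) + \langle -\Phi(x^\star),\, z - x^\star\rangle$ for all $z$, i.e., $-\Phi(x^\star) \in \partial\psi(x^\star)$, so $x^\star$ solves \eqref{eq:simplevi}. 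Since $x^\star$ and $\theta \in \Theta^\star$ were arbitrary, the claim follows. The main obstacle is this last limit passage: it is precisely the coupling of uniform convergence of $\Phi_{N,\theta}$ with continuity of $\Phi$ that lets the argument and the function values converge simultaneously, which is what the closed-graph argument requires; the measurability and full-measure bookkeeping is routine by comparison.
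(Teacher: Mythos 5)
Your proof is correct and follows essentially the same route as the paper's: apply \Cref{thm:uniformlln} to each pair $(\rho_j, F^j)$ and combine the resulting uniform convergence with the continuity of $x \mapsto \rho_j(F^j_x(\xi))$ to conclude that the sample-based risk values along a subsequence converging to the accumulation point tend to the true risk values at that point. The only difference is one of detail: where the paper compresses the final step into ``hence $x_\theta$ solves \eqref{eq:simplevi},'' you make the implicit graph-closedness of $\partial\psi$ explicit via the subgradient inequality and the lower semicontinuity of $\psi$, which is a useful elaboration rather than a different argument.
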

    \begin{proof}
    For $\Pb$-almost all $\theta \in \Theta$,
    let $x_\theta$ be an accumulation point of $(x_{N,\theta})_{N \in \mbbn}$,
    and let us denote the subsequence converging to $x$ by $(x_{N,\theta})_{N \in \mbbn}$ again.
    Because $x \mapsto \rho(F^j_x(\xi))$ is continuous,
    \Cref{thm:uniformlln} yields
    $\rho_{1}(\nu_{N, \theta}\circ (F_{x_{N, \theta}}^j)^{-1}) \to \rho(F_{x_{\theta}}^j(\xi))$
    as $N \to \infty$ for all
    $j \in \{1, \ldots, r\}$
    and $\mathbb{P}$-almost all $\theta \in \Theta$.
    Hence $x_\theta$ solves \eqref{eq:simplevi}.
\end{proof}

\section{Numerical Experiments}
\label{sec:numericalexperiements}
This section's objective is to numerically demonstrate that RQMC sample-based
approximations of risk-averse stochastic programs potentially reduce
RMSE and bias when compared with the MC sample-based scheme.
We approximate the optimal values of
\begin{equation}\label{eq:opnum}
    \min_{x \in X} \rho(F_x(\xi))
\end{equation}
using its sample-based counterparts
\begin{equation}\label{eq:samplebasednum}
    \min_{x \in X} \rho(\nu_{N, \theta} \circ F_x^{-1}),
\end{equation}
where the sequences $(\nu_{N, \theta})_{N \in \mathbb{N}}$ satisfy
\Cref{ass:consistency}. Throughout the section, $X$ is a compact subset in $\mathbb{R}^d$.
In this case, our epigraphical law of large number
when combined with standard properties
of epiconvergence
(see, e.g., Theorem~12.1.1 in \cite{Attouch2014}) provides conditions sufficient for almost sure consistency of the optimal value of
\eqref{eq:samplebasednum} towards that of
\eqref{eq:opnum}.
We refer to \eqref{eq:samplebasednum} as
sample-based problem or approximation problem.
The samples \(\xi^1, \xi^2, \ldots\) are generated using (RQ)MC methods.
We solve problems of the form \eqref{eq:samplebasednum} for increasing
sample sizes \(N\) and compute the RMSE and
bias. We run simulations for MC, scrambled Sobol', Halton,
and Latin Hypercube samples.
For one instance of \eqref{eq:opnum},
$\xi$ is a Gaussian random vector.
In addition to the aforementioned sample-based schemes, we employ Sobol' sequences combined with principal component analysis (PCA) as outlined in \cite[p.\ 594]{Heitsch2016}.

For our experiments, we use \texttt{Julia} along with
the library \texttt{JuMP} \cite{Lubin2023} to model problems of type \eqref{eq:samplebasednum},
\texttt{SCS} \cite{odonoghue:21} to solve
conic problems,
\texttt{HiGHS} \cite{MR3773090} to solve linear programs,
and \texttt{scipy}'s QMC engine \cite{Roy2023} to generate scrambled samples.
Besides the simulation output, our graphical illustrations contain further simulation details. Specifically, $M$ denotes the number of replications, and
    $d$ is the dimension of the decision variable $x$.
    The parameter, ref, indicates that the optimal value of \eqref{op} was estimated by averaging
    those of $M$ approximation problems, each based on \(2^{\text{ref}}\) samples of scrambled Sobol'
    sequences.
The computer code and simulation output is archived at \cite{olena_melnikov_2024_13227277}.

\subsection{Portfolio Optimization}\label{portfolio}
We consider Problem~4.11 in \cite{LanNemSha2012}, which is given by
\begin{align}\label{eq:nemproblem}
   \min_{x \in \mathbb{R}^d}\; \mathrm{CVaR}_{\beta}[-\xi^T x]\quad \text{s.t.}\quad
   \sum_{i=1}^d x_i = 1,\; \mu^T x \geq R,\; x \geq 0,
\end{align}
where $$\mathrm{CVaR}_\beta[V]
\coloneqq \inf_{t \in \mathbb{R}}\,
\{t + (1/(1-\beta)) \mathbb{E}[\max\{V-t,0\}]\}$$
is the conditional value-at-risk
with level $\beta \in (0,1)$
for an integrable random variable $V$. Moreover,
$\mu \in \mathbb{R}^d$, and $R \in \mathbb{R}$.
We perform experiments for two different distributions of $\xi$, as specified
in the following two sections.

\subsubsection{Normally Distributed Samples}\label{normal_section}
First, we choose a normally distributed
random vector $\xi$ with mean $\mu$ and covariance matrix $\Sigma$.
In this case, \eqref{eq:nemproblem} reduces
to a quadratic conic problem
(for details, see \cite{LanNemSha2012}). Hence, we can
compare the SAA optimal values with the exact ones.
We generate $\mu$ and $\Sigma$ as in \cite{LanNemSha2012}.
Specifically, we generate $\mu$ from
$\text{Uniform}[0.9,1.2]^d$ and $Q$
from $\text{Uniform}[0,0.1]^{d\times d}$, and set $\Sigma = QQ^T$.
The simulation output and parameter choices for Problem
\eqref{eq:nemproblem} with normally distributed $\xi$
are depicted in \Cref{fig:normal}.

\begin{figure}[!t]
    \centering
    \begin{minipage}[b]{0.49\textwidth}
        \centering
        \includegraphics[width=\textwidth]{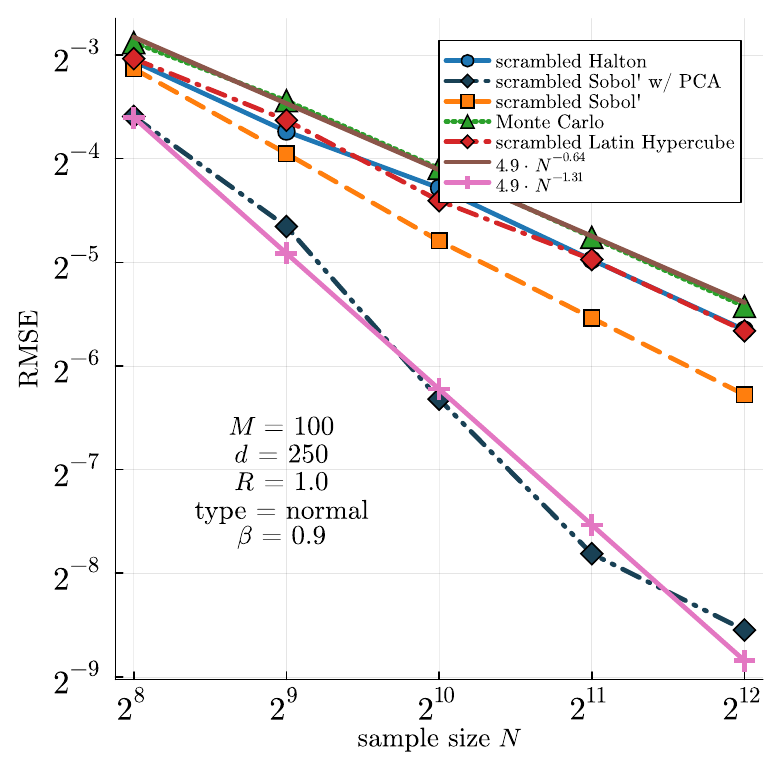}
        \label{fig:subfig11}
    \end{minipage}
    \hfill
    \begin{minipage}[b]{0.49\textwidth}
        \centering
        \includegraphics[width=\textwidth]{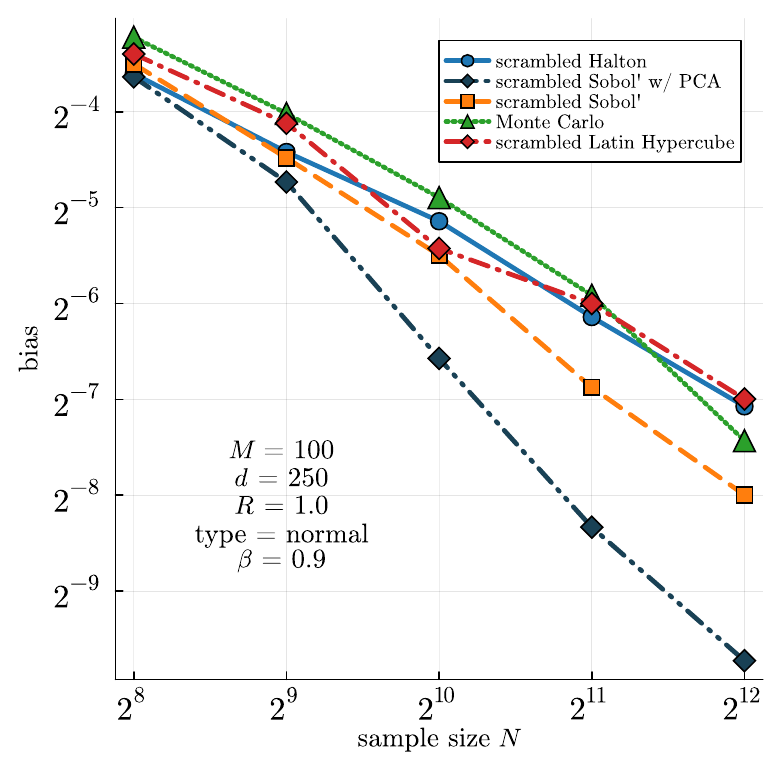}
        \label{fig:subfig12}
    \end{minipage}
    \vspace{0.5cm}
    \begin{minipage}[b]{0.49\textwidth}
        \centering
        \includegraphics[width=\textwidth]{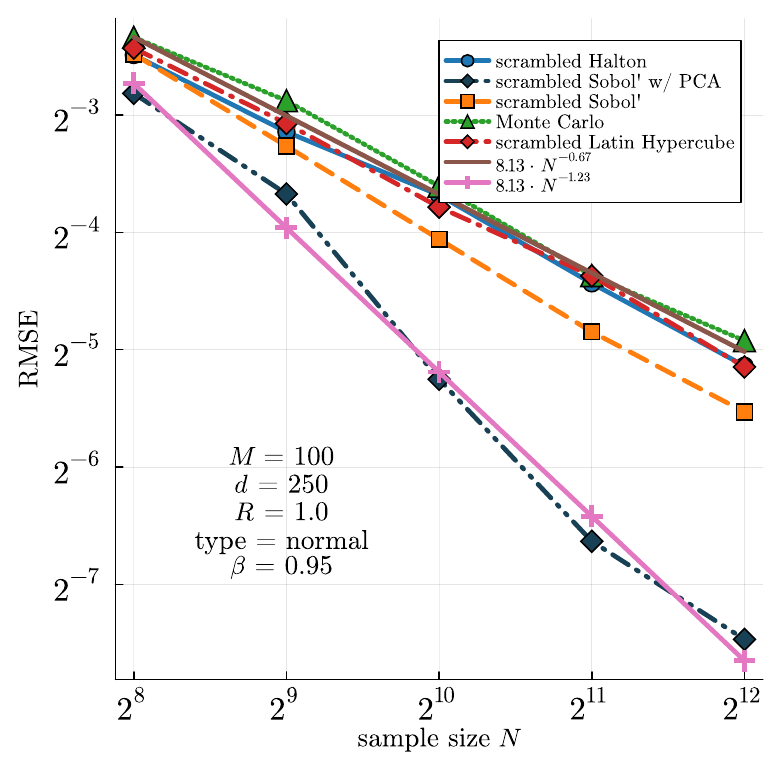}
        \label{fig:subfig13}
    \end{minipage}
    \hfill
    \begin{minipage}[b]{0.49\textwidth}
        \centering
        \includegraphics[width=\textwidth]{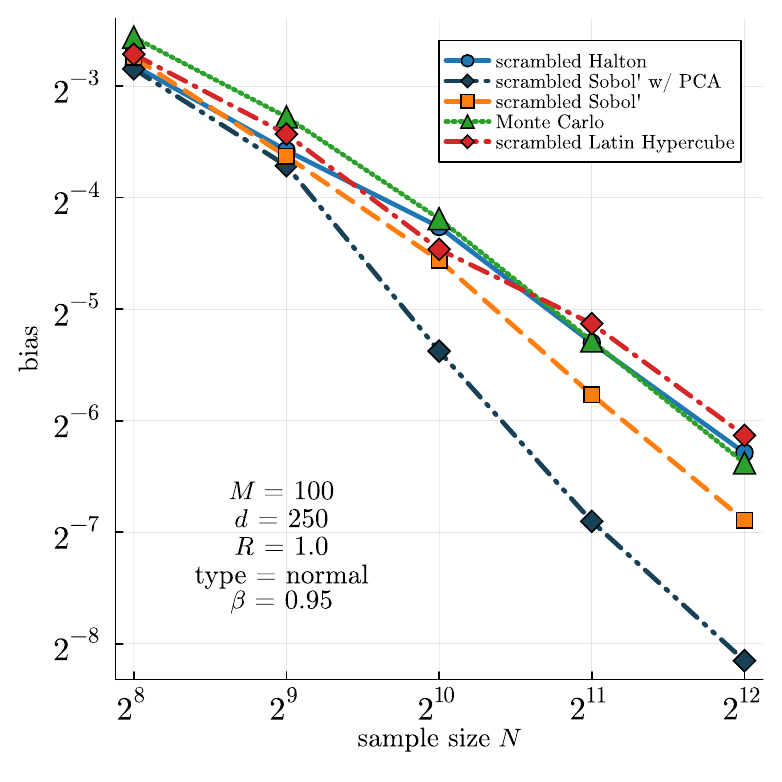}
        \label{fig:subfig14}
    \end{minipage}
    \caption{RMSE and bias  for the sample-based approximation of Problem \eqref{eq:nemproblem}. The experiments were conducted for a  normally distributed
random vector $\xi$ as described in \Cref{normal_section}, indicated by "type=normal." The top two panels correspond to $\beta = 0.9$, and the bottom two panels correspond to $\beta = 0.95$. The value $R$ is the constraint parameter given in  \eqref{eq:nemproblem}. The plots depicting the RMSE also include the least squares fit for the MC
    and scrambled Sobol' experiments.}
    \label{fig:normal}
\end{figure}

\subsubsection{Linearly Transformed Uniform Samples}\label{uniform_section}
For our second instance, we consider a distribution of $\xi$
defined in \cite[p.\ 471]{koivu2004variance}. Specifically, we define the random vector $\xi$ by
$
    \xi \coloneqq \mu + \sqrt{12}Q\zeta
$,
where $\zeta \sim \text{Uniform}[-1/2,1/2]^d$, and $Q$ and $\mu$
are generated as in \Cref{normal_section}.
Unlike the case for normally distributed $\xi$,
the problem does not reduce to a deterministic problem.
Thus, we approximate the optimal value of \eqref{eq:nemproblem}
using its sample-based counterparts.
Specifically, we solve $100$ sample-based approximations of
\eqref{eq:nemproblem}
and average their optimal values. Each of these approximations
is based on a realization of a scrambled Sobol' sequence with
a sufficiently large sample size.
The experiment outcomes and parameter choices for Problem
\eqref{eq:nemproblem} with $\xi$ defined as in this section
are shown in \Cref{fig:uniform}.

\begin{figure}[!t]
    \centering
    \begin{minipage}[b]{0.49\textwidth}
        \centering
        \includegraphics[width=\textwidth]{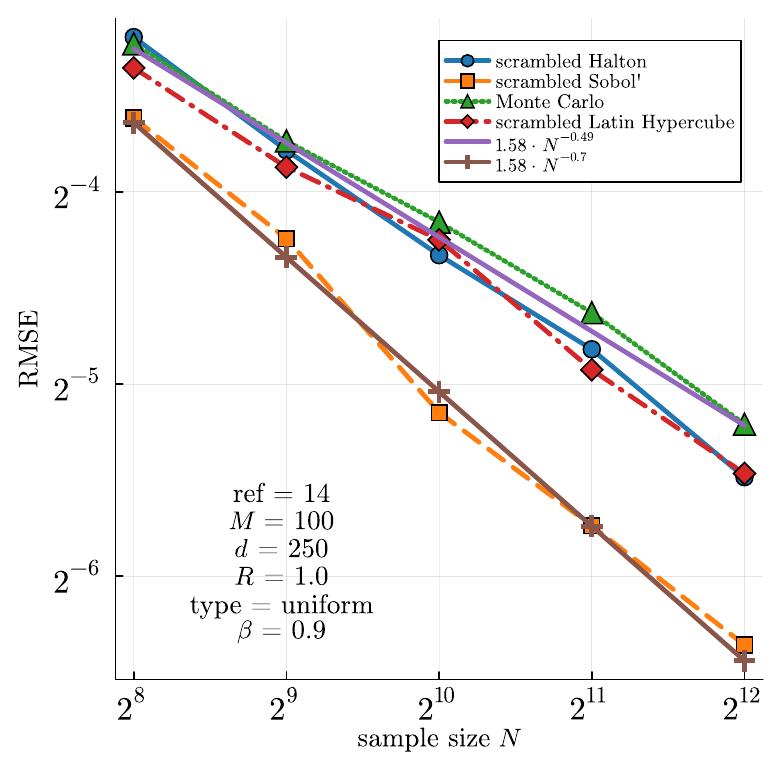}
        \label{fig:subfig21}
    \end{minipage}
    \hfill
    \begin{minipage}[b]{0.49\textwidth}
        \centering
        \includegraphics[width=\textwidth]{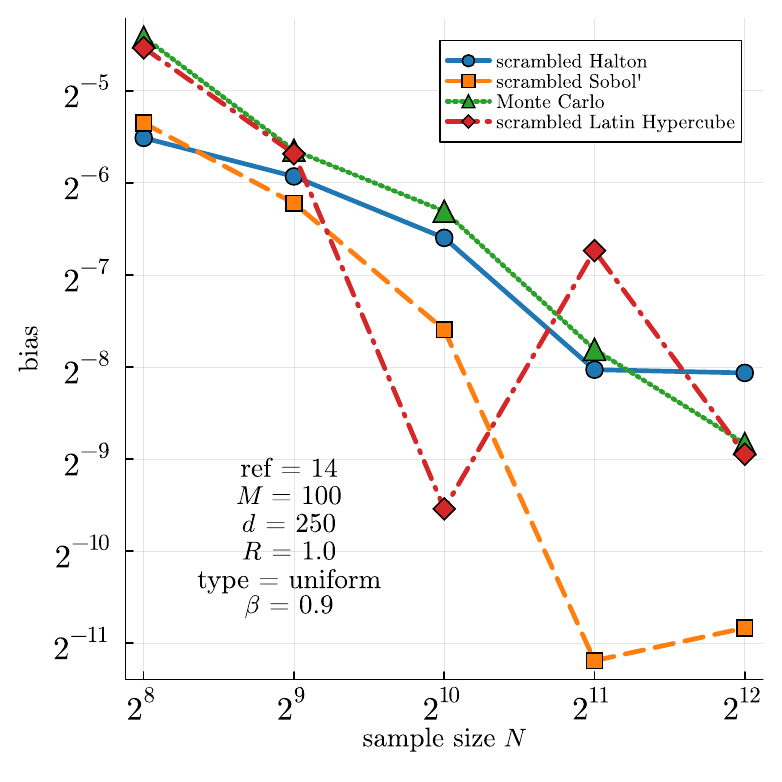}
        \label{fig:subfig22}
    \end{minipage}
    \vspace{0.5cm}
    \begin{minipage}[b]{0.49\textwidth}
        \centering
        \includegraphics[width=\textwidth]{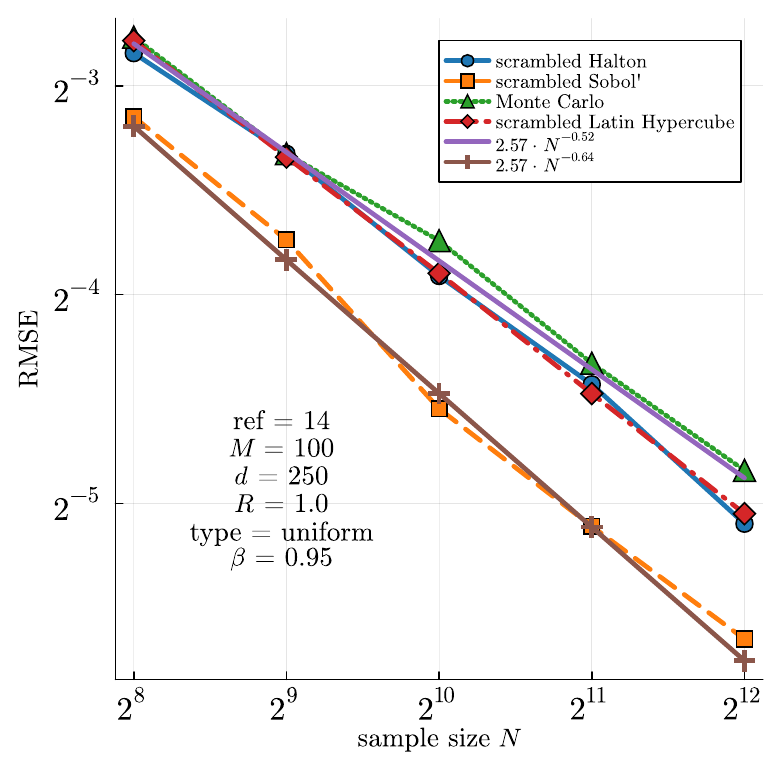}
        \label{fig:subfig23}
    \end{minipage}
    \hfill
    \begin{minipage}[b]{0.49\textwidth}
        \centering
        \includegraphics[width=\textwidth]{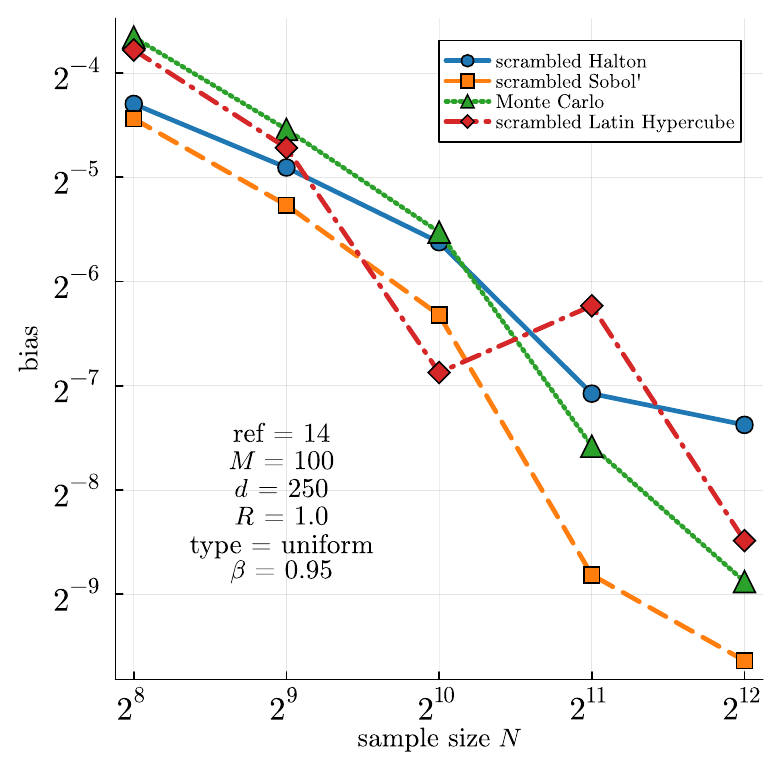}
        \label{fig:subfig24}
    \end{minipage}
    \caption{RMSE and bias  for the sample-based approximation of Problem \eqref{eq:nemproblem}. The experiments were conducted for $\xi$ distributed as described in \Cref{uniform_section}, indicated by "type=uniform."
    The value $R$ is the constraint parameter given in  \eqref{eq:nemproblem}. The top two panels correspond to $\beta = 0.9$, and the bottom two panels correspond to $\beta = 0.95$. The plots depicting the RMSE also include the least squares fit for the MC and scrambled Sobol' experiments.}
    \label{fig:uniform}
\end{figure}

\subsection{A Utility Problem}
\label{sec:twostage}
The second test problem is a slight alteration of a problem
in \cite{Zhang2021}. Let $(T,v,e)$ be a random
vector supported on $\mbbr^{md+ 2m}$,
and let $c \in \mbbr^d$. We consider
\begin{align}\label{op}
   \min_{x\in \mbbr^d}\; c^Tx+\mathrm{CVaR}_{\beta}[Q(x, (T,v,e))]&\quad\text{s.t.}\quad0\leq x_i \leq 1,\quad i = 1, \ldots, d,
\end{align}
where
\begin{align*}
    Q(x,(T,v,e)) \coloneqq \min_{y \in \mbbr^m_{+}} \{y^Te \,
    \colon \quad y\geq v-Tx \}.
\end{align*}
The triple $(T,v,e)$ is
a reshaped and transformed random vector $\xi \sim \text{Uniform}[0,1]^{md+2m}$.
In more detail, we generate samples
$(T_1, v_1, e_1), (T_2, v_2, e_2), \ldots$ from random vectors
$\xi^i \sim \text{Uniform}[0,1]^{md+2m}$ and $c$ as follows.
We choose $c \sim \text{Uniform}[0,1]^d$.
We generate the random vectors $\xi^1, \xi^2, \ldots, \xi^N\in
\mbbr^{md+2m}$ using (RQ)MC methods.
We reshape each $\xi^i$ into a tuple $(T_i, v_i, e_i)$ and
transform the tuple such that for each $i \in \{1, \ldots, N\}$,
the entries of $T_i$ are in $[0.5, 1.0]$,
those of  $v_i$ are in $[50,100]$,
and those of $e_i$ are in $[2,4]$.
As in \Cref{uniform_section}, we cannot solve
\eqref{op} exactly. Thus, we will approximate the optimal value of
\eqref{op} as in \Cref{uniform_section}.
The experiment outcomes and parameter choices for Problem
\eqref{op} are depicted in \Cref{fig:2stage}.

\begin{figure}[!t]
    \centering
    \begin{minipage}[b]{0.49\textwidth}
        \centering
        \includegraphics[width=\textwidth]{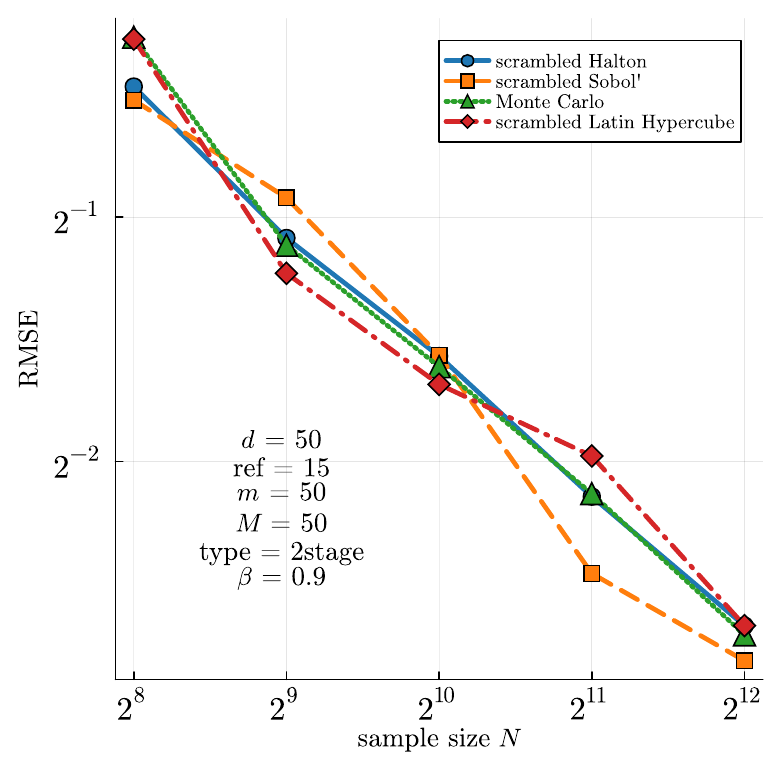}
        \label{fig:subfig31}
    \end{minipage}
    \hfill
    \begin{minipage}[b]{0.49\textwidth}
        \centering
        \includegraphics[width=\textwidth]{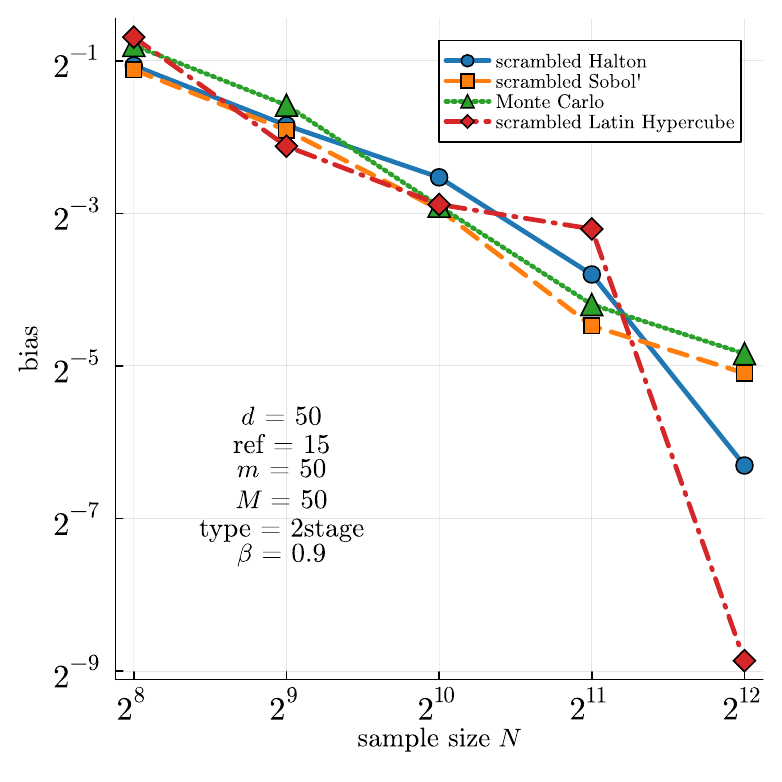}
        \label{fig:subfig32}
    \end{minipage}
    \vspace{0.5cm}
    \begin{minipage}[b]{0.49\textwidth}
        \centering
        \includegraphics[width=\textwidth]{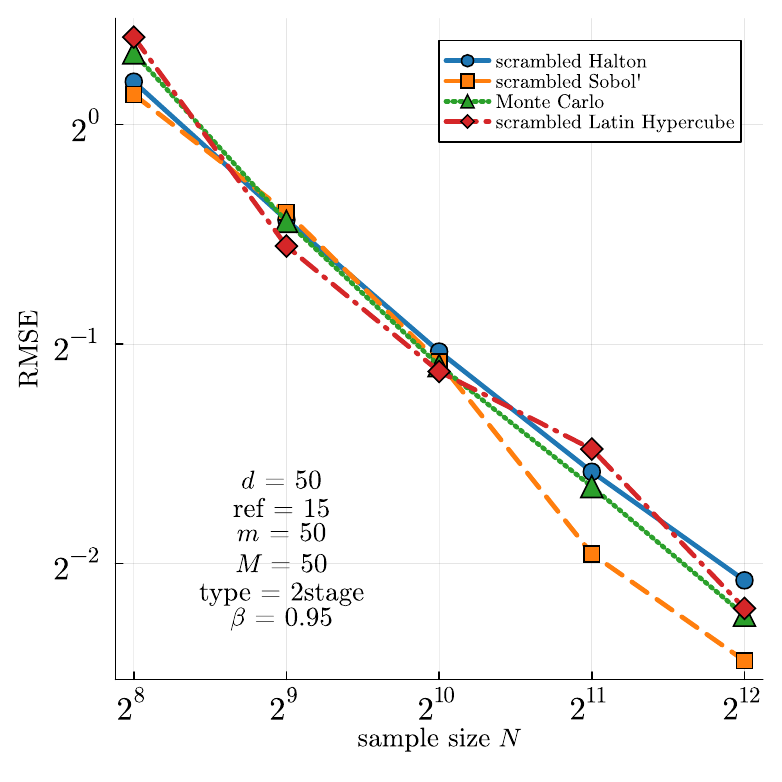}
        \label{fig:subfig33}
    \end{minipage}
    \hfill
    \begin{minipage}[b]{0.49\textwidth}
        \centering
        \includegraphics[width=\textwidth]{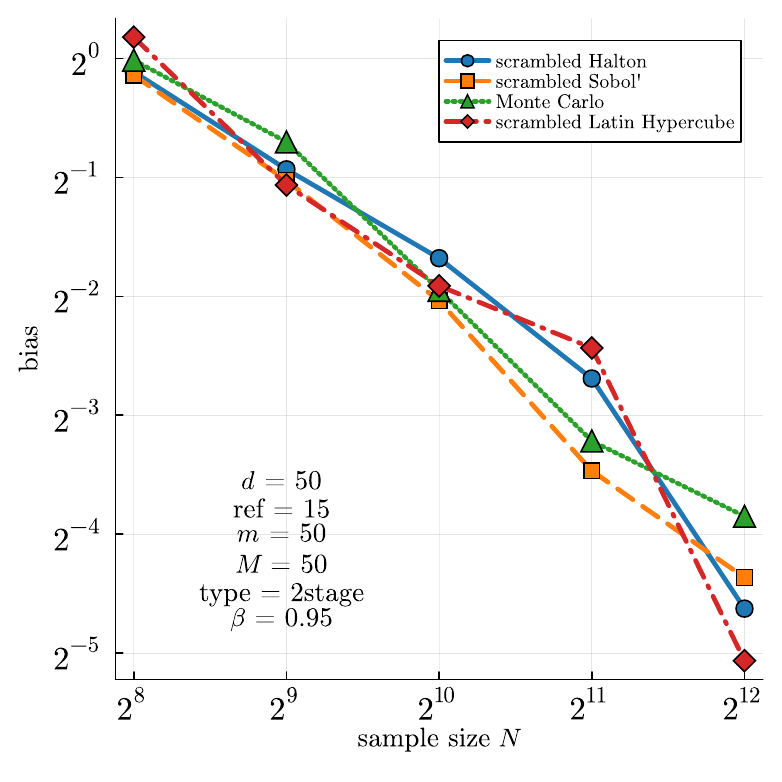}
        \label{fig:subfig34}
    \end{minipage}
    \caption{RMSE and bias  for the sample-based approximation of Problem \eqref{op}, indicated by "type=2stage." The top two panels correspond to $\beta = 0.9$, and the bottom two panels correspond to $\beta = 0.95$. The number $m$ is the dimension of $v$ and
    $e$ in the random element $(T,v,e) \in \mbbr^{(m\times d) + m + m}$.} \label{fig:2stage}
\end{figure}

\section{Discussion}

Motivated by a SLLN for scrambled net integration
\cite{Owen2021}, we have demonstrated the
epiconvergence and uniform convergence
for sample-based approximations of composite
risk functionals, provided that the sample-based
approximations satisfy a pointwise SLLN\@.
Besides MC sampling and
scrambled net integration, Latin hypercube sampling
yields epiconvergent and uniform approximations.

For normally distributed random inputs,
our numerical simulations show that scrambled Sobol'
integration can significantly reduce bias and RMSE
when combined with dimension reduction via PCA\@.
For the two-stage problem considered
in \Cref{sec:twostage}, the simulation output indicates that the MC sampling and the scrambled Sobol sequences have
similar RQMC and bias. In light of the
theoretical considerations, discussions, and
numerical simulations in \cite{Heitsch2016},
this could be a result of the potentially high
effective dimension.
Considering all of our numerical simulations and performance metrics,
scrambled Sobol' integration is always at least as good as MC sampling.

\subsection*{Data Availability Statement}
The computer code and the simulation output is
archived at \url{https://doi.org/10.5281/zenodo.13227277}.

\begin{footnotesize}
		\bibliography{RQMC-Melnikov-Milz.bbl}
	\end{footnotesize}
\end{document}